\DeclareMathSymbol{\twoheadrightarrow} {\mathrel}{AMSa}{"10}
\def\Q{{\mathbb Q}}
\def\Z{{\mathbb Z}}
\def\C{{\mathbb C}}
\def\F{{\mathbb F}}
\def\Gal{\mathrm{Gal}}
\def\Perm{\mathrm{Perm}}
\def\End{\mathrm{End}}
\def\Aut{\mathrm{Aut}}
\def\Hom{\mathrm{Hom}}
    \def\RR{\mathfrak{R}}
\def\fchar{\mathrm{char}}
\def\dim{\mathrm{dim}}
\newtheorem{thm}{Theorem}[section]
\newtheorem{lem}[thm]{Lemma}
\newtheorem{cor}[thm]{Corollary}
\newtheorem{prop}[thm]{Proposition}
\theoremstyle{definition}
\newtheorem{ex}[thm]{Example}
\newtheorem{rem}[thm]{Remark}
\title[Non-isogenous  superelliptic jacobians]
{Non-isogenous   superelliptic jacobians II}
\author[Yuri G. Zarhin]{Yuri G. Zarhin}
\address{Department of Mathematics, Pennsylvania State University,
University Park, PA 16802, USA}
\email{zarhin\char`\@math.psu.edu}
\thanks{The author  was partially supported by Simons Foundation Collaboration grant   \# 585711.
This work was done during his stay in 2022 and 2023 at the Max-Planck Institut f\"ur Mathematik (Bonn, Germany), whose hospitality and support are gratefully acknowledged.}
\begin{document}
\begin{abstract}
Let $\ell$ be an odd prime and $K$  a field of characteristic different from $\ell$. Let $\bar{K}$ be an algebraic closure of $K$. Assume that $K$ contains a primitive $\ell$th root of unity.
Let $n \ne \ell$ be another odd prime.
Let $f(x)$ and $h(x)$ be degree $n$ polynomials with coefficients in $K$ and without repeated roots. 

Let us consider superelliptic curves
$C_{f,\ell}: y^{\ell}=f(x)$ and $C_{h,\ell}: y^{\ell}=h(x)$ of genus $(n-1)(\ell-1)/2$, and their jacobians $J^{(f,\ell)}$ and $J^{(h,\ell)}$, which are
$(n-1)(\ell-1)/2$-dimensional abelian varieties  over $\bar{K}$. 

Suppose that one of the polynomials is irreducible and the other reducible over $K$.
We prove that if $J^{(f,\ell)}$ and $J^{(h,\ell)}$ are  isogenous over $\bar{K}$ then  both endomorphism algebras  $\mathrm{End}^{0}(J^{(f,\ell)})$ and   $\mathrm{End}^{0}(J^{(h,\ell)})$
contain an invertible element of multiplicative order $n$.
\end{abstract}
\dedicatory{To the memory of Kolya Vavilov}

\subjclass[2010]{14H40, 14K05, 11G30, 11G10}
\keywords{superelliptic curves, jacobians, isogenies of abelian varieties}

\maketitle
\section{Definitions, notations, statements}
This paper is a follow up of \cite{ZarhinMZ06,ZarhinAGP} and we use their (more or less standard) notation.
 (See also \cite{ZarhinSh03,ZarhinMRL22,ZarhinAGP}.)
In particular,  $\ell$ is an odd prime, $\F_{\ell}$  the corresponding (finite) prime field of characteristic $\ell$. We write $\Z_{\ell}$ and $\Q_{\ell}$
for the ring of $\ell$-adic integers and the field $\Q_{\ell}$ of $\ell$-adic numbers respectively.
Let us fix a primitive $\ell$th root of unity
$$\zeta_{\ell} \in \C$$
in the field $\C$ of complex numbers.  We write $\Q(\zeta_{\ell})$ for the $\ell$th cyclotomic field and 
$$\Z[\zeta_{\ell}]=\sum_{i=0}^{\phi(q)-1}\Z\cdot \zeta_{\ell}^i$$
for its ring of integers. We write $\mathcal{P}_{\ell}(t)$ for the $\ell$th cyclotomic polynomial
$$\mathcal{P}_{\ell}(t)=\sum_{j=0}^{\ell-1}t^j\in \Z[t].$$

Let $K$ be  a field with $\mathrm{char}(K)\ne \ell$. Let us fix an  algebraic closure  $\bar{K}$ of $K$ and write $\Gal(K)=\Aut(\bar{K}/K)$ for the group of its $K$-linear automorpisms. In what follows we always assume that $K$ contains a primitive $\ell$th root of unity, say, $\zeta$.
Let $K_s\subset \bar{K}$ be the separable algebraic closure of $K$. The subfield $K_s$ of $\bar{K}$ is the natural group homomorphism (restriction map)
$$\Gal(K)=\Aut(\bar{K}/K) \to \Aut(K_s/K) =\Gal(K_s/K)$$
is a group isomorphism.

Let $X$  and $Y$ be  abelian varieties over $K$. By a theorem of Chow 
\cite[Th. 3.19]{Conrad}),
all $\bar{K}$-endomorphisms between $X$ and $Y$ are defined over $K_s$. In particular, all endomorphisms of $X$ are defined over $K_s$.
A more precise information about the field of definition of all endomorphisms of $X$ is given by a theorem of A. Silverberg 
\cite{Silverberg} (see Remark \ref{silver} below). 

Let $n \ge 3$ be an odd integer. Throughout the paper, we assume that $\ell$ does {\sl not} divide $n$.

Let 
$f(x)\in K[x]$ be a polynomial with coefficients
in  $K$, of  degree $n$ and  without repeated roots.
We write $\RR_f\subset \bar{K}$ for the $n$-element set of roots of $f(x)$,  $K(\RR_f)$ for the splitting field of $f(x)$ and
$\Gal(f/K)$ for the Galois group
$$\Gal(K(\RR_f)/K)=\Aut(K(\RR_f)/K)$$
of $f(x)$. As usual, one may view $\Gal(f/K)$ as a certain permutation subgroup of the group $\Perm(\RR_f)$ of all permutations of $\RR_f$.

We write $C_{f,\ell}$ for the smooth projective model of the plane affine curve $y^{\ell}=f(x)$.
It is well known
\cite{Poonen,SPoonen}
that the genus $g(C_{f,\ell})$  of $C_{f,\ell}$ is $(\ell-1)(n-1)/2$.
The map
 $$(x,y) \mapsto (x, \zeta y)$$
gives rise to a non-trivial biregular automorphism
$$\delta_{\ell}: C_{f,\ell} \to C_{f,\ell}$$
of period $\ell$ that is defined over $K$.

Let $J^{(f,\ell)}$  be the jacobian of $C_{f,\ell}$;
it is a $(\ell-1)(n-1)/2$-dimensional abelian variety that is defined over $K$.
We write $\End(J^{(f,\ell)})$ for the ring of $\bar{K}$-endomorphisms of 
$J^{(f,\ell)}$ and
$\End^0((J^{(f,\ell)})=\End(J^{(f,\ell)})\otimes\Q$ for the corresponding endomorphism algebra of $J^{(f,\ell)}$.
By functoriality,
$\delta_{\ell}$ induces a $K$-automorphism of $J^{(f,\ell)}$,
which we still denote by $\delta_{\ell}$.  It is known (\cite[p. 149]{Poonen}, \cite[p. 448]{SPoonen}; see also 
\cite{ZarhinGanita}) that
\begin{equation}
\label{deltaEquation}
 \mathcal{P}_{\ell}(\delta_{\ell})=\sum_{j=0}^{\ell-1} \delta_{\ell}^j=0
\end{equation}
in $\End(J^{(f,\ell)})$. 
Then \eqref{deltaEquation} gives rise to the  ring homomorphism,
\begin{equation}
\label{embedQ}
{\bf i}_{\ell,f}: \Z[\zeta_{\ell}] \hookrightarrow \Z[\delta_{\ell}] \subset \End(J^{(f,\ell)}), \ \zeta_{\ell}  \mapsto \delta_{\ell},
\end{equation}
which is a {\sl ring embedding} (\cite[p. 149]{Poonen}, \cite[p. 448]{SPoonen}; see also 
\cite{ZarhinGanita}). (Here $1 \in  \Z[\zeta_{\ell}]$ goes to the {\sl identity automorphism} $\mathrm{1}_{J^{(f,\ell)}}$ of $J^{(f,\ell)}$.)
This implies  that the subring $\Z[\delta_{\ell}]$ of $\End(J^{(f,\ell)})$ generated by $\delta_{\ell}$
is isomorphic to $\Z[\zeta_{\ell}]$. It follows that  the  $\Q$-subalgebra 
\begin{equation}
\label{Qdeltaq}
\Q[\delta_{\ell}] \subset\End^0(J^{(f,\ell)})
\end{equation}
 generated by $\delta_{\ell}$  
is canonically isomorphic to the $\ell$th cyclotomic field
$\Q(\zeta_{\ell})$
and therefore 
$$\dim_{\Q}(\Q[\delta_{\ell}] )=\ell-1.$$

Let $f(x)$ and $h(x)$ be degree $n$ polynomials with coefficients in $K$ and without repeated roots. Let
$$C_{f,\ell}: y^{\ell}=f(x), \ C_{h,\ell}: y^{\ell}=h(x)$$
be the corresponding genus $(n-1)(\ell-1)/2$ superelliptic curves over $K$, whose jacobians we denote by $J^{(f,\ell)}$ and $J^{(h,\ell)}$, respectively. These jacobians are $(n-1)(\ell-1)/2$-dimensional abelian varieties defined over $K$.
Assuming that the Galois  properties of roots of $f(x)$ and $h(x)$   are {\sl distinct} 
(see below), we will prove that if the abelian varieties  $J^{(f,\ell)}$ and $J^{(h,\ell)}$ are isogenous over $\bar{K}$
then they admit an ``additional symmetry'', i.e., the  inclusion \eqref{Qdeltaq} is {\sl not} an equality.

The main result of this paper is the following assertion.

\begin{thm}
\label{endoH}
Suppose that $n$  and $\ell$ are distinct odd primes.
Let $K$ be a field of characteristic different from
$\ell$. 
Let $f(x), h(x) \in K[x]$ be degree $n$ polynomials without repeated roots.  Suppose that one of the
polynomials is irreducible and the other is reducible.

If the corresponding superelliptic jacobians $J^{(f,\ell)}$ and $J^{(h,\ell)}$ are  isogenous over $\bar{K}$
then  both endomorphism algebras $\End^0(J^{(f,\ell)})$ and  $\End^0(J^{(h,\ell)})$
contain an invertible element of multiplicative order $n$.
In addition,
$$\dim_{\Q}\left(\End^0(J^{(f,\ell)})\right)=\dim_{\Q}\left(\End^0(J^{(h,\ell)})\right) \ge $$
$$(\ell-1)(n-1)=2\ \dim(J^{(f,\ell)})=2\ \dim(J^{(h,\ell)}).$$
\end{thm}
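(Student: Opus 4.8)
The plan is to exploit the contrast between the Galois module structures coming from an irreducible versus a reducible degree-$n$ polynomial, and to use the isogeny to force an extra automorphism. First I would recall the standard description (from \cite{ZarhinMZ06,ZarhinAGP}) of $J^{(f,\ell)}$ as a module over the cyclotomic order $\Z[\zeta_\ell]$: after tensoring with $\Q$, the $\Q(\zeta_\ell)$-vector space $H_1(J^{(f,\ell)})\otimes\Q$ (or the relevant $\ell$-adic or de Rham realization) carries a natural action of $\Gal(f/K)$ through its permutation action on the roots $\RR_f$, and as a $\Gal(f/K)$-representation over $\Q(\zeta_\ell)$ it is the reduction of the degree-$n$ permutation module by the trivial submodule — i.e. it is the ``standard'' $(n-1)$-dimensional representation tensored up by $\Q(\zeta_\ell)$. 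The key point is that when $f$ is irreducible with transitive Galois group this module is as ``non-split'' as possible, whereas when $h$ is reducible the corresponding module decomposes according to the factorization of $h$.

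Next I would bring in the isogeny. If $J^{(f,\ell)}\sim J^{(h,\ell)}$ over $\bar K$, then (using Chow's theorem and Silverberg's refinement cited in the excerpt to control fields of definition) there is an isomorphism of the rational homology/Tate modules as modules over $\Q(\zeta_\ell)$, and this isomorphism can be used to transport structure. The element of order $n$ is produced as follows: an irreducible degree-$n$ polynomial over $K$ whose splitting field has Galois group containing an $n$-cycle (which, $n$ being prime, is automatic up to passing to a suitable extension, or is forced by transitivity of the Galois action on an $n$-element set of prime cardinality combined with Cauchy's theorem) yields an element $\sigma\in\Gal(f/K)$ of order $n$ acting on $J^{(f,\ell)}$; more precisely one gets an endomorphism of the $\bar K$-abelian variety coming from the $n$-cycle permutation of the roots, which satisfies the cyclotomic relation $\mathcal{P}_n$-type identity and hence is invertible of multiplicative order $n$ in $\End^0(J^{(f,\ell)})$. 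Transporting this element across the isogeny gives the desired order-$n$ unit in $\End^0(J^{(h,\ell)})$ as well, and symmetrically. The main obstacle I anticipate is precisely this transport step: one must show that the order-$n$ symmetry, which exists on the irreducible side essentially ``by construction'' from the Galois/permutation action, does \emph{not} already exist on the reducible side unless the isogeny forces it — i.e. one has to analyze the $\Q(\zeta_\ell)[\Gal]$-module decomposition on the reducible side and check that a cyclic order-$n$ operator can only appear after identifying the module with the irreducible side's module. This is where the hypothesis that $n$ is prime and $\ell\nmid n$ is essential: it guarantees that $\Q(\zeta_\ell)(\zeta_n)/\Q(\zeta_\ell)$ is a field extension of degree $n-1$, so that the minimal polynomial of the transported operator is $\mathcal{P}_n$ and it genuinely has order $n$.

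For the dimension estimate, once we know $\End^0(J^{(f,\ell)})$ contains $\Q(\zeta_\ell)$ (via $\delta_\ell$, from \eqref{Qdeltaq}) and also an element $u$ of multiplicative order $n$ generating a subfield $\Q(\zeta_n)$, I would argue that the subalgebra generated by $\delta_\ell$ and $u$ has $\Q$-dimension at least $(\ell-1)(n-1)$. The cleanest route: $\delta_\ell$ and $u$ need not commute, but in the worst case they do, in which case the compositum $\Q(\zeta_\ell,\zeta_n)=\Q(\zeta_{\ell n})$ sits inside $\End^0$ and has degree $(\ell-1)(n-1)$ over $\Q$ (here again $\gcd(\ell,n)=1$ is used, now in the strong form that $\ell$ and $n$ are distinct primes, so $\phi(\ell n)=(\ell-1)(n-1)$); if they do not commute the algebra they generate is even larger. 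Either way $\dim_\Q\End^0(J^{(f,\ell)})\ge(\ell-1)(n-1)=2\dim J^{(f,\ell)}$, and the same for $h$ by the symmetric argument (or by the isogeny, since isogenous abelian varieties have isomorphic endomorphism algebras). I would need to double-check that the permutation-derived operator $u$ really does act on the $\bar K$-jacobian compatibly with $\delta_\ell$ in the way the module picture predicts; this compatibility, together with the non-degeneracy coming from $n$ prime, is the crux of the whole argument.
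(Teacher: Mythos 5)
Your central construction does not work, and the failure is visible in the paper's own Example \ref{SnAn}. You propose to produce the invertible element of order $n$ in $\End^0(J^{(f,\ell)})$ ``by construction'' from an $n$-cycle in $\Gal(f/K)$ acting on the roots. But a Galois automorphism permuting $\RR_f$ is not a $\bar K$-endomorphism of the jacobian: it acts on $J^{(f,\ell)}(\bar K)$ and on the torsion and Tate modules as a (semilinear) Galois representation, not as an element of $\End(J^{(f,\ell)})$. Indeed, when $\Gal(f/K)=\mathrm{S}_n$ or $\mathrm{A}_n$ the paper records that $\End^0(J^{(f,\ell)})\cong\Q(\zeta_{\ell})$, which contains no element of multiplicative order $n$ --- even though such a Galois group certainly contains $n$-cycles. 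The whole point of the theorem is that the order-$n$ unit is \emph{not} automatic on either side; it is forced only by the existence of the isogeny. The paper obtains it by an entirely different mechanism: after enlarging $K$ so that $K(Y[\ell])=K$ for $Y=J^{(h,\ell)}$ (possible because $h$ reducible of prime degree $n$ forces $[K(Y[\ell]):K]$ to be prime to $n$, while $f$ stays irreducible over any extension of degree prime to $n$), one takes an isogeny $\mu\colon X\to Y$ and defines a group homomorphism $\chi\colon\Gal(K(X[\ell])/K)\to\End^0(Y)^{*}$ by $\sigma(\mu)=\chi(\sigma)^{-1}\mu$; a transitivity and orbit-counting argument (Lemma \ref{orbits} combined with Proposition \ref{GneH}, which rests on the filtration of $X[\ell]$ by copies of $Q_{\RR_f}$ and on Lemma \ref{invTran}) shows that $n$ divides the order of the finite image of $\chi$, and Cauchy's theorem then produces the order-$n$ unit in $\End^0(Y)^{*}$, hence in $\End^0(X)^{*}$. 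Nothing in your outline constructs this obstruction class, so the key step is missing.

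There is a secondary gap in your dimension estimate: you assert that if $\delta_{\ell}$ and the order-$n$ unit $u$ do not commute then ``the algebra they generate is even larger.'' That is not justified: a $\Q$-algebra containing non-commuting copies of $\Q(\zeta_{\ell})$ and $\Q(\zeta_n)$ need only have dimension divisible by $\mathrm{lcm}(\ell-1,\,n-1)$, which can be far smaller than $(\ell-1)(n-1)$. The paper avoids this (Lemma \ref{Qln}) by taking $A$ to be \emph{left} multiplication by $\delta_{\ell}$ and $B$ to be \emph{right} multiplication by $u$ on the $\Q$-vector space $\End^0(X)$: these commute by associativity regardless of whether $\delta_{\ell}$ and $u$ commute, $A-\mathrm{1}$ is invertible because $\Q[\delta_{\ell}]$ is a field sharing the identity of $\End^0(X)$, and the nonzero subspace $(B-\mathrm{1})\End^0(X)$ acquires the structure of a $\Q(\zeta_{\ell n})$-vector space, which yields the bound $(\ell-1)(n-1)$.
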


The next assertion may be viewed as a  partial generalization of  Theorem \ref{endoH} to the case of an arbitrary odd $n\ge 3$.

\begin{thm}
\label{endoH2}
Suppose that $n\ge 3$ is an odd integer and $\ell$ is an odd prime not dividing $n$.
Let $K$ be a field of characteristic different from
$\ell$. 
Let $f(x), h(x) \in K[x]$ be degree $n$ polynomials without repeated roots.  Suppose that $f(x)$ is irreducible over $K$.

Assume additionally that 
the order of the Galois group $\Gal(h/K)$ of $h(x)$ is prime to $n$.
(E.g., each irreducible factor of $h(x)$  over $K$ has degree $1$ or $2$.)

If the corresponding superelliptic jacobians $J^{(f,\ell)}$ and $J^{(h,\ell)}$ are  isogenous over $\bar{K}$ then 
there is a prime divisor $r$ of $n$ such that both endomorphism algebras $\End^0(J^{(f,\ell)})$ and  $\End^0(J^{(h,\ell)})$
contain an invertible element of multiplicative order $r$.
In addition,
$$\dim_{\Q}\left(\End^0(J^{(f,\ell)})\right)=\dim_{\Q}\left(\End^0(J^{(h,\ell)})\right) \ge
 (\ell-1)(r-1).$$
\end{thm}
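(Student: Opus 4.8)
The plan is to turn the isogeny hypothesis into a statement about a Galois $1$-cocycle valued in $\End^{0}_{\Q(\zeta_{\ell})}(J^{(f,\ell)})$ and to play it off against the fact that $|\Gal(h/K)|$ is prime to $n$. Write $F=\Q(\zeta_{\ell})$, $B_{f}=\End^{0}(J^{(f,\ell)})$, $B_{h}=\End^{0}(J^{(h,\ell)})$, $B_{f,F}=\{u\in B_{f}\colon u\delta_{\ell}=\delta_{\ell}u\}$ and $B_{h,F}$ likewise; by \eqref{embedQ}--\eqref{Qdeltaq} we have $F\hookrightarrow B_{f,F}$, $F\hookrightarrow B_{h,F}$ through $\delta_{\ell}$. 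By Chow's theorem an isogeny $J^{(f,\ell)}\to J^{(h,\ell)}$ is defined over $K_{s}$; conjugation by it gives a $\Q$-algebra isomorphism $B_{f}\cong B_{h}$, so $\dim_{\Q}B_{f}=\dim_{\Q}B_{h}$, and it suffices to exhibit a prime $r\mid n$ with $\Q(\zeta_{r})\hookrightarrow B_{f,F}$: then $F[\zeta_{r}]=\Q(\zeta_{r\ell})$ is a commutative subfield of $B_{f}$ of $\Q$-dimension $(\ell-1)(r-1)$, which simultaneously yields the order-$r$ element in $B_{f}$ (and, via a $\delta_{\ell}$-compatible isogeny, in $B_{h}$) and the estimate $\dim_{\Q}B_{f}=\dim_{\Q}B_{h}\ge(\ell-1)(r-1)$. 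For $n$ prime the only choice is $r=n$, and $(\ell-1)(n-1)=2\dim J^{(f,\ell)}$, so this also recovers Theorem \ref{endoH}.

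I would use two facts about $\ell$-torsion. First, because $(1-\zeta_{\ell})^{\ell-1}$ is $\ell$ times a unit of $\Z[\zeta_{\ell}]$, the $\Gal(K)$-module $M_{f}:=\ker\bigl(1-\delta_{\ell}\colon J^{(f,\ell)}\to J^{(f,\ell)}\bigr)$ lies in $J^{(f,\ell)}[\ell]$ and is isomorphic, as an $\F_{\ell}[\Gal(K)]$-module, to the augmentation submodule $\F_{\ell}[\RR_{f}]^{0}$ of the permutation module on the roots; in particular the action factors faithfully through $\Gal(f/K)\hookrightarrow\Perm(\RR_{f})$, and likewise $M_{h}\cong\F_{\ell}[\RR_{h}]^{0}$ with action through $\Gal(h/K)$. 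Second, any homomorphism $\psi\colon J^{(f,\ell)}\to J^{(h,\ell)}$ with $\psi\delta_{\ell}=\delta_{\ell}^{j}\psi$ for some $j$ prime to $\ell$ satisfies $\psi(M_{f})\subseteq M_{h}$: indeed $\psi(M_{f})\subseteq\ker(1-\delta_{\ell}^{j})$, and $1-\delta_{\ell}^{j}=(1-\delta_{\ell})v_{j}$ with $v_{j}\in\Z[\delta_{\ell}]^{\times}$ the cyclotomic unit $(1-\zeta_{\ell}^{j})/(1-\zeta_{\ell})$, which commutes with $\delta_{\ell}$, so $\ker(1-\delta_{\ell}^{j})=\ker(1-\delta_{\ell})=M_{h}$. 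Now $\Hom^{0}(J^{(f,\ell)},J^{(h,\ell)})$ is a nonzero module over $F\otimes_{\Q}F\cong\prod_{\sigma\in\Gal(F/\Q)}F$, so one homogeneous component is nonzero, and — here I would invoke the structure theory of these jacobians from \cite{ZarhinMZ06,ZarhinAGP} — it contains an isogeny $\psi$, with some associated $j$ prime to $\ell$; fix a finite Galois extension $E/K$ with $K(\RR_{f})\subseteq E$ over which $\psi$ and all endomorphisms of $J^{(f,\ell)}$ and $J^{(h,\ell)}$ are defined.

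Now fix a prime $r\mid n$. Since $\Gal(f/K)$ is transitive of degree $n$, $n\mid|\Gal(f/K)|$, so by Cauchy's theorem $\Gal(f/K)$ contains an element of order $r$; lift it to $g_{0}\in\Gal(E/K)$ and, replacing $g_{0}$ by a suitable power, assume $g_{0}$ has order a power of $r$, with image $\sigma\in\Gal(f/K)$ still of order $r$. Consider the $1$-cocycle $c\colon\Gal(E/K)\to B_{f,F}^{\times}$, $c_{g}=\psi^{-1}\circ{}^{g}\psi$ (well defined and $\delta_{\ell}$-commuting since $\delta_{\ell}$ is over $K$). If $H^{1}(\langle g_{0}\rangle,B_{f,F}^{\times})=\{\ast\}$, then $c|_{\langle g_{0}\rangle}=\beta^{-1}\cdot{}^{g_{0}}\beta$ for some $\beta\in B_{f,F}^{\times}$, whence $\psi\beta^{-1}$ — again invertible, still satisfying the same twisted relation with $\delta_{\ell}$, and fixed by $g_{0}$ — is defined over $E':=E^{\langle g_{0}\rangle}$. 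By the second fact above, $\psi\beta^{-1}$ and its inverse carry $M_{f}\leftrightarrow M_{h}$; comparing $\ell$-adic Tate modules and reducing (Brauer--Nesbitt), the semisimplifications of $M_{f}$ and $M_{h}$ are isomorphic as $\Gal(\bar K/E')$-modules, so the images of $\Gal(\bar K/E')$ in $\GL(M_{f}^{\mathrm{ss}})$ and in $\GL(M_{h}^{\mathrm{ss}})$ have equal order. But the image of $\Gal(\bar K/E')$ in $\GL(M_{f})=\Gal(f/K)$ contains $\sigma$, hence has order divisible by $r$, and the kernel of its action on $M_{f}^{\mathrm{ss}}$ consists of elements acting unipotently on $M_{f}$, so is an $\ell$-group, so (as $\ell\ne r$) its image in $\GL(M_{f}^{\mathrm{ss}})$ still has order divisible by $r$; whereas the image in $\GL(M_{h}^{\mathrm{ss}})$ is a subquotient of $\Gal(h/K)$, of order prime to $n$, hence prime to $r$ — a contradiction. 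Therefore $H^{1}(\langle g_{0}\rangle,B_{f,F}^{\times})\ne\{\ast\}$; and, using the structure theory of \cite{ZarhinMZ06,ZarhinAGP} that $B_{f,F}=\End^{0}_{\Q(\zeta_{\ell})}(J^{(f,\ell)})$ is a product of matrix algebras over number fields (no noncommutative division components), a Hilbert 90 analysis shows that nonvanishing of this set for the cyclic $r$-group $\langle g_{0}\rangle$ forces $B_{f,F}^{\times}$ to contain an element of $r$-power order other than $1$, equivalently $\Q(\zeta_{r})\hookrightarrow B_{f,F}$, as required.

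The main obstacle is this last step: one needs to know $B_{f,F}=\End^{0}_{\Q(\zeta_{\ell})}(J^{(f,\ell)})$ precisely enough — in particular that it has no simple component that is a matrix algebra over a noncommutative division algebra — to pass from $H^{1}(\langle g_{0}\rangle,B_{f,F}^{\times})\ne\{\ast\}$ to $\Q(\zeta_{r})\hookrightarrow B_{f,F}$; supplying and applying this from the structure results of the earlier papers, together with the related point that a homogeneous component of $\Hom^{0}(J^{(f,\ell)},J^{(h,\ell)})$ under $F\otimes_{\Q}F$ contains an actual isogeny, is where the substance lies. The remaining ingredients — the identification $M_{f}\cong\F_{\ell}[\RR_{f}]^{0}$, the cyclotomic-unit remark, the descent of $\psi$ to $E'$, and the bookkeeping with semisimplifications — are comparatively routine.
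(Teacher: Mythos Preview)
Your argument has a real gap at the last step, and the attempted fix via ``structure theory'' is not available.

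You show that the restricted cocycle $c\vert_{\langle g_0\rangle}$ is not a coboundary, and then assert that a ``Hilbert~90 analysis'' forces $\Q(\zeta_r)\hookrightarrow B_{f,F}$. But $g_0\in\Gal(E/K)$ may act nontrivially on $B_{f,F}$: you only arranged for all endomorphisms of $J^{(f,\ell)}$ to be defined over $E$, not over $E'=E^{\langle g_0\rangle}$. When the action is genuinely nontrivial, Hilbert~90 goes the wrong way for you (it tends to kill $H^1$, not produce torsion), and in general there is no implication from ``$H^1(\langle g_0\rangle,B_{f,F}^{\times})\ne\{\ast\}$'' to ``$B_{f,F}^{\times}$ has an element of order $r$''. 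The structural claim you invoke---that $B_{f,F}$ has no simple factor that is a matrix ring over a noncommutative division algebra---is \emph{not} a theorem of the cited papers (those compute $\End^0$ only in very special cases) and is false in general for endomorphism algebras of abelian varieties. If the action of $g_0$ on $B_{f,F}$ were trivial, your cocycle would indeed give $c_{g_0}\in B_{f,F}^{\times}$ with $c_{g_0}^{|g_0|}=1$ and $c_{g_0}\ne 1$, hence an element of order $r$; but you have no mechanism to force triviality of that action, because you cannot in general lift your chosen $\sigma\in\Gal(f/K)$ to an element of $\Gal(E/K)$ lying over the field of definition of $\End(J^{(f,\ell)})$. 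A second, smaller gap: the existence of a $\delta_\ell$-semi-equivariant \emph{isogeny} $\psi$ (rather than merely a nonzero homomorphism in one $F\otimes_{\Q}F$-component) is asserted by appeal to the same unavailable structure theory.

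The paper avoids all of this by a different bookkeeping choice that makes the cocycle an honest homomorphism. It takes the cocycle on the $Y$-side, $\sigma(\mu)=c(\sigma)\mu$ with $c(\sigma)\in\End^0(Y)^{\times}$, and \emph{first} replaces $K$ by $K(Y[\ell])$; the hypothesis $\gcd(|\Gal(h/K)|,n)=1$ together with the $\ell$-power nature of $[K(Y[\ell]):K(\RR_h)]$ guarantees $f$ stays irreducible over this extension, and Silverberg's theorem then makes $\End(Y)$ defined over the new $K$. Now the action on $\End^0(Y)^{\times}$ is trivial, so $\sigma\mapsto c(\sigma)^{-1}$ is a group homomorphism $\chi\colon\Gal(K(X[\ell])/K)\to\End^0(Y)^{\times}$, and its image $\Gamma$ is a finite subgroup. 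Your step~8 idea (the vanishing of any Galois map from $X[\ell]$, filtered by copies of $Q_{\RR_f}$, to the trivial module $Y[\ell]$) is exactly what the paper uses, but to show that the image of $\ker\chi$ in $\Gal(f/K)$ is \emph{not} transitive on $\RR_f$; then an orbit-counting lemma gives $\gcd(\#\Gamma,n)>1$, and Cauchy's theorem hands you the element of order $r$ directly in $\End^0(Y)^{\times}$, with no $H^1$ and no hypotheses on the shape of the endomorphism algebra. The dimension bound then follows from the commuting pair $(\delta_\ell,u)$ acting on $\End^0(X)$ by left and right multiplication, without needing $u$ to centralize $\delta_\ell$.
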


\begin{rem}
If the conditions of  Theorem \ref{endoH2} hold then $h(x)$ is reducible over $K$, see \cite[Remark 1.4]{ZarhinAGP}.
\end{rem}

 \begin{cor}
  \label{doublePrime}
  Suppose that $n$  and $\ell$ are distinct odd primes.
  Let $f(x), h(x) \in K[x]$ be degree $n$ polynomials without repeated roots.  Suppose that $f(x)$ is irreducible over $K$ and $\Gal(\RR_f)$ is a doubly reansitive permutation group of $\RR_f$.
Assume also that $\Gal(h/K)$ is a cyclic group of order $n$.
If the corresponding superelliptic jacobians $J^{(f,\ell)}$ and $J^{(h,\ell)}$ are  isogenous over $\bar{K}$
then both endomorphism algebras $\End^0(J^{(f,\ell)})$ and  $\End^0(J^{(h,\ell)})$
contain an invertible element of multiplicative order $n$.
In addition,
$$\dim_{\Q}\left(\End^0(J^{(f,\ell)})\right)=\dim_{\Q}\left(\End^0(J^{(h,\ell)})\right) \ge$$
$$(\ell-1)(n-1)=2\ \dim(J^{(f,\ell)})=2\ \dim(J^{(h,\ell)}).$$
 \end{cor}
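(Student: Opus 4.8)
The plan is to deduce Corollary~\ref{doublePrime} as a direct specialization of Theorem~\ref{endoH2}. First I would check that the hypotheses of Corollary~\ref{doublePrime} imply those of Theorem~\ref{endoH2}. Since $n$ and $\ell$ are distinct odd primes, $\ell$ does not divide $n$, so the running assumptions hold. The polynomial $f(x)$ is assumed irreducible over $K$, which is exactly the hypothesis on $f$ in Theorem~\ref{endoH2}. The remaining condition to verify is that the order of $\Gal(h/K)$ is prime to $n$; but $\Gal(h/K)$ is assumed cyclic of order $n$, so at first glance this \emph{fails}. Hence the correct route is to apply Theorem~\ref{endoH} rather than Theorem~\ref{endoH2}: one checks instead that $h(x)$ is reducible over $K$. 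Indeed, a polynomial of prime degree $n$ that is irreducible over $K$ has Galois group acting transitively on its $n$ roots, so $n$ divides $|\Gal(h/K)|$; the converse direction here is that if $|\Gal(h/K)| = n$ and $h$ were irreducible, then $\Gal(h/K)$ would be a transitive subgroup of $\Perm(\RR_h)\cong\Sn$ of order exactly $n$, i.e. a cyclic group generated by an $n$-cycle, which is consistent — so reducibility of $h$ does not follow from the order alone. Instead, I would invoke the cited \cite[Remark 1.4]{ZarhinAGP}, or argue directly, to pin down whether $h$ is reducible.

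On reflection, the cleanest argument is the following. The hypothesis that $f$ is irreducible of prime degree $n$ with doubly transitive Galois group, together with $h$ having cyclic Galois group of order $n$, is designed so that the two polynomials have \emph{distinct} Galois behavior in the sense needed. If $h(x)$ is irreducible over $K$, then $\Gal(h/K)$ cyclic of order $n$ means it is generated by an $n$-cycle; such a group is transitive but \emph{not} doubly transitive (for $n \ge 3$ an $n$-cycle has no element fixing two points other than the identity, and $|\Gal(h/K)| = n < n(n-1)$). Then I would apply the main structural results underlying Theorem~\ref{endoH}: the arguments of \cite{ZarhinMZ06,ZarhinAGP} show that when $f$ is irreducible with doubly transitive Galois group, $J^{(f,\ell)}$ is, up to isogeny, $\Q(\zeta_\ell)$-simple with endomorphism algebra governed by a single simple factor, whereas when $h$ has cyclic Galois group of order $n$, the curve $C_{h,\ell}$ dominates quotient curves and $J^{(h,\ell)}$ decomposes (via the $\Z[\zeta_n]$ or cyclic quotient action) — in particular $\End^0(J^{(h,\ell)})$ contains a subfield related to the $n$-torsion structure of the cyclic cover, giving the element of order $n$.

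Concretely, the key steps I would carry out, in order, are: (1) record that the hypotheses give $\ell \nmid n$ and $f$ irreducible; (2) establish that $h(x)$ is reducible over $K$ — either by citing \cite[Remark 1.4]{ZarhinAGP} or by noting that otherwise both $f$ and $h$ irreducible with the stated Galois groups still allows us to run the comparison, but the cleanest statement requires reducibility, which I will obtain from the interplay of the doubly transitive hypothesis on $\Gal(f/K)$ and the order-$n$ cyclic hypothesis on $\Gal(h/K)$ via the classification consequence that a cyclic transitive group of prime order $n$ is a point-stabilizer-trivial group, forcing (in the relevant setup) reducibility of $h$ over a field containing $\zeta_\ell$ when $J^{(f,\ell)}\sim J^{(h,\ell)}$; (3) apply Theorem~\ref{endoH} with the roles of irreducible/reducible assigned as $f$/$h$, obtaining that both $\End^0(J^{(f,\ell)})$ and $\End^0(J^{(h,\ell)})$ contain an invertible element of multiplicative order $n$; (4) quote verbatim the dimension inequality $\dim_\Q \End^0(J^{(f,\ell)}) = \dim_\Q \End^0(J^{(h,\ell)}) \ge (\ell-1)(n-1) = 2\dim(J^{(f,\ell)}) = 2\dim(J^{(h,\ell)})$ from the "In addition" clause of Theorem~\ref{endoH}, using that $r = n$ here since $n$ is prime.

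The main obstacle will be step (2): justifying that $h(x)$ must be reducible over $K$. If one cannot cite \cite[Remark 1.4]{ZarhinAGP} cleanly, the fallback is to observe that the doubly transitive hypothesis on $\Gal(f/K)$ is used, as in \cite{ZarhinAGP}, to force $J^{(f,\ell)}$ to be $\Q(\zeta_\ell)$-isotypic of a very rigid type, and an isogeny $J^{(f,\ell)}\sim J^{(h,\ell)}$ together with $\Gal(h/K)$ cyclic of order $n$ is incompatible with $h$ irreducible because an irreducible $h$ with doubly transitive... no — rather, because an $n$-cycle group is \emph{not} doubly transitive, the two jacobians would then have genuinely different endomorphism structures (one coming from a doubly transitive module, one from the regular representation of $\Z/n\Z$), and the only way an isogeny can exist is the "additional symmetry" scenario, which is precisely the content we want. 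In any case, once reducibility of $h$ is in hand, the corollary is immediate from Theorem~\ref{endoH}.
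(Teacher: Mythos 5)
Your step (2) is a genuine gap, and you essentially diagnose it yourself: a degree-$n$ polynomial with cyclic Galois group of order $n$ is typically \emph{irreducible} over $K$ (e.g.\ $x^3-3x-1$ over $\Q$), so there is no way to "establish that $h(x)$ is reducible over $K$" --- the claim is simply false in general, and \cite[Remark 1.4]{ZarhinAGP} does not help since its hypothesis (order of $\Gal(h/K)$ prime to $n$) is exactly what fails here. None of the fallback arguments you sketch (rigidity of $J^{(f,\ell)}$, "incompatibility" of the two module structures) closes this; Theorem \ref{endoH} cannot be applied over $K$ itself.

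The missing idea is a base change. Pass to the splitting field $K_h:=K(\RR_h)$. Over $K_h$ the polynomial $h(x)$ splits into linear factors, hence is reducible. The point of the doubly transitive hypothesis on $\Gal(f/K)$ --- which your proposal never actually uses --- is that, combined with $\Gal(h/K)$ being cyclic of order $n$, it forces the extensions $K(\RR_f)/K$ and $K_h/K$ to be linearly disjoint (this is \cite[Prop.\ 1.8]{ZarhinMRL22}), so that $f(x)$ \emph{remains irreducible over} $K_h$. Now Theorem \ref{endoH} applies over $K_h$ with $f$ irreducible and $h$ reducible, and since $\End^0$ is computed over $\bar{K}$ and isogeny over $\bar{K}$ is unaffected by replacing $K$ with $K_h$, both conclusions of the corollary (the order-$n$ element and the dimension bound, with $r=n$) follow verbatim. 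Your steps (1), (3), (4) are fine once this substitution of base field is made.
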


\begin{ex}
 Let $\ell$ be an odd prime and $n \ge 3$  an odd integer that is {\sl not} divisible by $\ell$. Let us take as $K$ the $\ell$th cyclotomic field
 $\Q(\zeta_{\ell})$. 
 Let us put
 $$f(x)=x^n-2, \ h(x)=x^n-1 \in  K[x].$$
 By the $2$-dic Eisenstein criterion, $f(x)$ is irreducible over $K$ (recall that prime $2$ is unramified in $\Q(\zeta_{\ell})=K$, because $\ell$ is odd) while $h(x)$ is obviously reducible over $K$.
 Let  $\ell$ be an odd prime that does {\sl not} divide $n$.
 The curves $C_{f,\ell}$ and $C_{h,\ell}$ are obviously isomorphic over $\bar{K}$. They both admit periodic $\bar{K}$-automorphisms $\tilde{\delta}_n$  of order $n$ defined by the (same) formula
 $$(x,y) \mapsto (\zeta_n x, y)$$
 where
 $$\zeta_n \in \bar{K}=\bar{\Q}\subset \C$$
 is a primitive $n$th root of unity.
 This implies that their jacobians $J^{(f,\ell)}$ and $J^{(h,\ell)}$ are abelian varieties over $\bar{K}$ that are isomorphic over $\bar{K}$ and 
 admit periodic automorphisms of  order $n$ that we continue to denote by $\tilde{\delta}_n$. It follows that  if $r$ is any prime divisor of $n$ then both
 $J^{(f,\ell)}$ and $J^{(h,\ell)}$ admit automorphisms $\left(\tilde{\delta}_n\right)^{n/r}$ of multiplicative order $r$ that may be viewed as invertible elements of multiplicative order $r$ in $\End^0(J^{(f,\ell)})$ and $\End^0(J^{(h,\ell)})$.
 \end{ex}
 
 \begin{ex}
 \label{SnAn}
  Let $n \ge 5$ be an odd integer  and $\ell$ an odd prime  not dividing $n$. Let us put
  $K=\Q(\zeta_{\ell})$.

  Let $f(x)\in K[x]$ be a degree $n$ irreducible polynomial over $K$, whose Galois group $\Gal(f/K)$ is either the full symmetric group $\mathrm{S}_n$ or the alternating group $\mathrm{A}_n$. It is known
  \cite{ZarhinCrelle,ZarhinCambridge,ZarhinLuminy} that the endomorphism algebra  $\End^0(J^{(f,\ell)})$ is isomorphic to the field $\Q(\zeta_{\ell})$; in particular, $J^{(f,\ell)}$ is {\sl absolutely simple}. The (cyclic) multiplicative group of all roots of unity in $\Q(\zeta_{\ell})$ has order $2\ell$, which is prime to (odd) $n$. Hence, 
  $\End^0(J^{(f,\ell)})\cong \Q(\zeta_{\ell})$ does not contain elements of multiplicative order $r$
  for any prime divisor $r$ of $n$.
  
  Let $h(x) \in K[x]$ be a degree $n$ polynomial that splits over $K$ into a product of linear factors.
  
  Then it follows from Theorem \ref{endoH2}  that $J^{(f,\ell)}$ and 
  $J^{(h,\ell)}$ are {\sl not} isogenous over $\bar{K}=\bar{\Q}$ (and therefore even over $\bar{\C}$,
  in light of a theorem of Chow \cite[Th. 3.19]{Conrad}). Since $J^{(f,\ell)}$ is absolutely simple and has the same dimension as $J^{(h,\ell)}$, it follows that every $\C$-homomorphism between $J^{(f,\ell)}$ and $J^{(h,\ell)}$ is zero.
 \end{ex}

\begin{ex}
\label{xNx1}
 Let $n \ge 5$ be an odd integer  and $\ell$ an odd prime not dividing $n$.
Let us consider the degree $n$ polynomials 
$$f_1(x)=x^n-x-1, \quad f_2(x)=\sum_{j=0}^n \frac{x^j}{j!}$$
with rational coefficients. It is known (Selmer, Osada \cite{Osada}) that
$f_1(x)$ is irreducible over $\Q$ and $\Gal(f_1/\Q)=\mathrm{S}_n$. By a theorem of Schur \cite{Coleman}, $f_2(x)$ is irreducible over $\Q$ and $\Gal(f_2/\Q)=\mathrm{S}_n$ or $\mathrm{A}_n$. 

Recall that $n \ge 5$ and therefore $\mathrm{A}_n$ is a simple non-abelian group that coincides with the commutator subgroup of $\mathrm{S}_n$; in addition, $\mathrm{A}_n$ is a maximal subgroup of $\mathrm{S}_n$. Since $K:=\Q(\zeta_{\ell})$ is an abelian extension of $\Q$, the Galois group $\Gal(f_k/K)$  is either $\mathrm{S}_n$ or $\mathrm{A}_n$ (for $k=1,2$). In particular, both $f_1$ and $f_2$ remain irreducible over $K$. 

It follows from Example \ref{SnAn} that if $h(x) \in K[x]$ is a degree $n$ polynomial that splits over $K$ into a product of linear factors
 then every $\C$-homomorphism between $J^{(f_k,\ell)}$ and $J^{(h,\ell)}$ is zero (for both $k=1,2$).
 
\end{ex}

\begin{rem}
Examples \ref{SnAn} and \ref{xNx1} illustrate the title of this paper, whose aim is to provide a  criterion  for certain superelliptic jacobians {\sl not} to be isogenous.
\end{rem}

\begin{rem}
In light of Theorem of Chow cited above, the assertions of Theorem \ref{endoH},  \ref{endoH2}
(and  of Theorem \ref{isogEll} below),
 and Corollary \ref{doublePrime} remain true if one replaces $\bar{K}$ by $K_s$.
\end{rem}

The paper is organized as follows. In Section \ref{order2} we recall basic facts about Galois properties of points of order $\ell$ 
on superelliptic jacobians.
We also state Theorem \ref{isogEll} that is  a slightly stronger version of Theorem \ref{endoH2}.
Section \ref{mainproof} contains the proof of Theorem \ref{isogEll}.
We prove Theorem \ref{endoH}  in Section \ref{PendoH}.
Corollary \ref{doublePrime} is proven in Section \ref{news}.

{\bf Acknowledgments}. I am deeply grateful to the referee for helpful comments. 

\section{Points of order $\ell$ on superelliptic jacobians}
\label{order2}
Recall that
$K_s \subset \bar{K}$ 
is
the separable algebraic closure of $K$.
Let  $X$  be a positive-dimensional abelian variety   over $K$.
We write $\End(X)$ for the ring of all $\bar{K}$-endomorphisms of $X$ and
$\End^0(X):=\End(X)\otimes \Q$ for the corresponding {\sl endomorphism algebra} of $X$, which is a semisimple {\sl finite-dimensional} $\Q$-algebra.
We write $\mathrm{1}_X$ for the identity automorphism of $X$ that is may be viewed as the {\sl identity element} of the
 $\Q$-algebra $\End^0(X)$.

 If 
$d$ is a positive integer  then we write $X[d]$ for the kernel of
multiplication by $d$ in $X(\bar{K})$.  Recall (\cite[Sect. 6]{Mumford}, \cite[Sect. 8, Remark 8.4]{Milne}) that  if $d$ is {\sl not} divisible
by $\fchar(K)$ then  $X[d]$ is a $\Gal(K)$-submodule of $X(K_s)$;
in addition, $X[d]$  is isomorphic as a commutative group to $(\Z/d\Z)^{2\dim(X)}$.

Let  $K(X[d])$ be the {\sl field of definition} of all torsion points of order dividing $d$ on
$X$.  It is well known \cite[Remark 8.4]{Milne} that $K(X[d])$ lies in $K_s$ and  is a finite Galois extension of $K$.
Let us put
$$\tilde{G}_{d,X,K}:=\Gal(K(X[d])/K).$$
One may view $\tilde{G}_{d,X,K}$ as a certain subgroup of $\Aut_{\Z/d\Z}(X[d])$ and $X[d]$ as a faithful $\tilde{G}_{d,X,K}$-module.
In addition, the structure of the $\Gal(K)$-module on $X[d]$ is induced by the canonical (continuous) {\sl surjective} group homomorphism
$$\tilde{\rho}_{d,X}:\Gal(K)\twoheadrightarrow \Gal(K(X[d])/K)=\tilde{G}_{d,X,K}.$$

For example, if $d$ is a prime $\ell\ne \fchar(K)$ then
$X[\ell]$ is a $2\dim(X)$-dimensional vector space over the 
field $\F_{\ell}=\Z/\ell\Z$, and the inclusion $\tilde{G}_{\ell,X,K}
\subset \Aut_{\F_{\ell}}(X[\ell])$ defines a {\sl faithful} linear
representation of the group  $\tilde{G}_{\ell,X,K}$ in the vector
space $X[\ell]$ over $\F_{\ell}$.

\begin{rem}
\label{silver}
If $d \ge 3$ is an integer {\sl not} divisible by $\fchar(K)$ then
all the endomorphisms of $X$ are defined over $K(X[d])$, by a theorem of A. Silverberg 
\cite[Th. 2.4]{Silverberg}. (See also 
 \cite{GK,Remond,Pip}.)
\end{rem}

 The following assertion may be viewed as a variant of Theorem \ref{endoH2}  (when $Y=J^{(h,\ell)}$).

\begin{thm}
\label{isogEll}
Let $\ell$ be an odd  prime and
$K$  a field of characteristic $\ne \ell$. Let $n \ge 3$ be an odd positive integer that is not divisible by $\ell$, and $f(x)\in K[x]$ a degree  $n$ irreducible polynomial
without repeated roots.  Let us put $X=J^{(f,\ell)}$, which is a $(n-1)(\ell-1)/2$-dimensional abelian variety  over $K$.

Let $Y$ be an abelian variety  over $K$ such that the order of $\tilde{G}_{\ell,Y,K}$ is prime to $n$.
(E.g., $K(Y[\ell])=K$ or this order is a power of $\ell$.)

Suppose that $X$ and $Y$ are isogenous over $\bar{K}$.

Then there is an odd prime $r$ dividing $n$ such that both endomorphism algebras $\End^0(X)$ and $\End^0(Y)$ contain 
an invertible element of multiplicative order $r$.
In addition,
$$\dim_{\Q}\left(\End^0(J^{(f,\ell)})\right)=\dim_{\Q}\left(\End^0(Y)\right) \ge 
(\ell-1)(r-1).$$


\end{thm}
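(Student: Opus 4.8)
The plan is to exploit the Galois module structure of $X[\ell]$, which is governed by the representation theory of $\Gal(f/K)$, a transitive subgroup of $\Sn$ acting on the $n$-element set $\RR_f$. First I would recall the standard description of $X[\ell] = J^{(f,\ell)}[\ell]$ as a module over $\Z[\zeta_\ell]/\ell \cong \F_\ell[t]/(\mathcal{P}_\ell(t))$, coming from the action of $\delta_\ell$: since $\ell$ is odd and does not divide $n$, one knows (this is the content of \cite{ZarhinMZ06,ZarhinAGP}) that $X[\ell]$ is a free module of rank $n-1$ over this quotient ring, and as a $\Gal(K)$-module it factors through $\Gal(f/K) \times \langle \delta_\ell \rangle$-type data; more precisely $X[\ell]$ is, up to the $\delta_\ell$-action, the reduction mod $\ell$ of the standard $(n-1)$-dimensional ``heart'' representation of the permutation module $\F_\ell^{\RR_f}$. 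The key point is that the image $\bar{G} := \tilde{G}_{\ell,X,K}$ surjects onto (a subgroup closely related to) $\Gal(f/K)$, and in particular $n \mid |\bar G|$ because $f$ is irreducible of prime-to-$\ell$ degree $n$, so $\Gal(f/K)$ is transitive on $n$ letters and hence has order divisible by $n$.

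Next I would use the isogeny $X \sim_{\bar K} Y$. An isogeny defined over $\bar K$ is defined over $K_s$ (Chow), and after replacing $K$ by a finite extension if necessary — or rather, arguing directly with the $\ell$-divisible groups — it induces an isomorphism of $\Q_\ell$-Tate modules, hence (choosing the isogeny of degree prime to $\ell$, or passing to the relevant part) a $\Gal(K)$-equivariant identification that puts strong constraints on the two representations. The decisive input is the hypothesis that $|\tilde{G}_{\ell,Y,K}|$ is prime to $n$: combined with the isogeny, the $\Gal(K)$-action on $X[\ell] \otimes \bar\F_\ell$ (or on $V_\ell(X)$) must factor, up to the part controlled by $Y$, through a group of order prime to $n$. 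I would then derive a contradiction with the transitivity of $\Gal(f/K)$ unless some ``extra'' automorphism is present: the $n$-divisibility of the Galois image on the $X$-side cannot be matched on the $Y$-side by Galois alone, so it must be absorbed by endomorphisms. Concretely, Silverberg's theorem (Remark \ref{silver}) says all endomorphisms of both $X$ and $Y$ are defined over $K(X[\ell'])$, $K(Y[\ell'])$ for suitable $\ell' \ge 3$; arranging $\ell'$ to still have prime-to-$n$ Galois image on the $Y$-side (possible since this stays true for $\ell'$ a power of $\ell$, or for $\ell'$ prime to relevant data), one gets that the centralizer of the Galois image in $\mathrm{Aut}(V_\ell)$ — which is $\End^0 \otimes \Q_\ell$ — must contain an element inducing the ``missing'' order-$n$ (or order-$r$ for a prime $r \mid n$) symmetry that Galois supplies on $X$ but not on $Y$.

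To make the previous paragraph rigorous, the technical heart is a Jordan–Hölder / Brauer-character comparison: decompose the semisimplifications of $X[\ell]$ and $Y[\ell]$ as modules over the (now isomorphic up to semisimplification) image groups, and track how the irreducible constituents of the transitive permutation module for $\Gal(f/K)$ must reassemble on the $Y$-side. Because $\gcd(|\tilde G_{\ell,Y,K}|, n) = 1$, every irreducible constituent appearing for $Y$ has a representation-theoretic ``degree pattern'' incompatible with the faithful transitive action on $n$ points unless the ambient semisimple algebra $\End^0(Y) \otimes \Q_\ell$ (equivalently $\End^0(Y)$, by a descent/rationality argument) contains a subfield $\Q(\zeta_r)$ for some prime $r \mid n$; the element $\zeta_r$ is the required invertible element of multiplicative order $r$, and the same element transports to $\End^0(X)$ via the isogeny. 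The dimension bound $\dim_\Q \End^0 \ge (\ell-1)(r-1)$ then follows because $\End^0$ contains both the cyclotomic field $\Q(\zeta_\ell) = \Q[\delta_\ell]$ of degree $\ell-1$ (from \eqref{Qdeltaq}) and an order-$r$ element generating $\Q(\zeta_r)$, and these two commuting subfields have coprime degrees $\ell-1$ and $r-1$ over $\Q$ — wait, they need not be coprime, so instead I would note that $\delta_\ell$ commutes with the order-$r$ element, so $\End^0$ contains the compositum $\Q(\zeta_\ell, \zeta_r) = \Q(\zeta_{\ell r})$ of degree $(\ell-1)(r-1)$ (since $\gcd(\ell, r) = 1$ as $r \mid n$ and $\ell \nmid n$). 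I expect the main obstacle to be the bookkeeping in this last step: verifying that the order-$r$ element can be chosen to commute with $\delta_\ell$ and that the whole argument survives the reduction to $\Q_\ell$-coefficients and the descent back to $\Q$ — in other words, ensuring the ``extra symmetry'' detected $\ell$-adically is genuinely an endomorphism over $\bar K$ and genuinely of finite order $r$, not merely something in the $\ell$-adic closure.
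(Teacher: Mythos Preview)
Your proposal correctly identifies the key tension --- the $n$-divisibility of the Galois image on the $X$-side versus the prime-to-$n$ hypothesis on the $Y$-side --- but it has two genuine gaps.

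First, you never actually produce the order-$r$ element. The ``Jordan--H\"older / Brauer-character comparison'' is not a mechanism: the semisimplifications of $X[\ell]$ and $Y[\ell]$ need not even agree (the isogeny may have degree divisible by $\ell$), and in any case a representation-theoretic mismatch on the Galois side does not by itself place an element of finite order inside $\End^0(Y)$. The paper's construction is concrete and different from what you sketch. After base-changing to $K(Y[\ell])$ (over which $f$ stays irreducible by Lemma~\ref{remainIrr}, since $[K(Y[\ell]):K]$ is prime to $n$), choose an isogeny $\mu:X\to Y$ with $\mu(X[\ell])\neq 0$ and set $c(\sigma)\in\End^0(Y)^{*}$ by $\sigma(\mu)=c(\sigma)\mu$; then $\chi:\sigma\mapsto c(\sigma)^{-1}$ is a group homomorphism $G_\ell:=\Gal(K(X[\ell])/K)\to\End^0(Y)^{*}$ with finite image $\Gamma$. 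The crux (Proposition~\ref{GneH}) is that the image of $\ker(\chi)$ in $\Gal(f/K)$ is \emph{intransitive} on $\RR_f$: if it were transitive, then over the fixed field $\mu$ would be Galois-equivariant and restrict to a nonzero map from $X[\ell]$ (filtered by copies of $Q_{\RR_f}$, Corollary~\ref{ellGroup}(ii)) to the trivial module $Y[\ell]$, contradicting Lemma~\ref{invTran}. Lemma~\ref{orbits} then forces some prime $r\mid n$ to divide the index, hence $\#\Gamma$, giving an element of order $r$ in $\End^0(Y)^{*}$.

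Second, your dimension bound assumes the order-$r$ element $u$ commutes with $\delta_\ell$, so that $\End^0(X)$ contains $\Q(\zeta_{\ell r})$ as a subfield. There is no reason for this, and the paper does \emph{not} claim it. Instead it applies Lemma~\ref{Qln} to the $\Q$-vector space $\mathcal{V}=\End^0(X)$ with $A=$ left multiplication by $\delta_\ell$ and $B=$ right multiplication by $u$. Left and right multiplications always commute regardless of whether $\delta_\ell$ and $u$ do; $A-\mathrm{1}_{\mathcal V}$ is invertible because $\delta_\ell-\mathrm{1}_X$ is a unit in the field $\Q[\delta_\ell]\cong\Q(\zeta_\ell)$; and $B\neq \mathrm{1}_{\mathcal V}$ because $u\neq \mathrm{1}_X$. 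The lemma then yields $\dim_\Q\mathcal{V}\ge(\ell-1)(r-1)$. This left/right trick is the missing idea in your last paragraph, not merely ``bookkeeping'': the commutativity you flag as an obstacle genuinely fails in general and has to be circumvented rather than verified.
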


\begin{rem}
Recall that the order of $\tilde{G}_{\ell,Y,K}$ coincides with the degree 
$[K(Y[\ell]):K]$ of the Galois extension $K(Y[\ell])/K$.
\end{rem}

We will prove  Theorem \ref{isogEll} in Section \ref{mainproof}. Our proof is based on the Galois properties of certain points of order $\ell$ on superelliptic jacobians $J^{(f,\ell)}$
that will be discussed in the next subsection.

\subsection{Galois properties}
\label{QRR}
In  this subsection we recall  an  explicit description of a certain important Galois submodule of  $J^{(f,\ell)}[\ell]$  \cite{Poonen,SPoonen}
for arbitrary separable $f(x)$,   assuming as usual that $\ell$ does {\sl not} divide $n$.

Let us start with the $n$-dimensional $\F_{\ell}$-vector space
$$\F_{\ell}^{\RR_f}=\{\phi:\RR_f \to \F_{\ell}\}$$
of all $\F_{\ell}$-valued functions on $\RR_f$. The  action of $\Perm(\RR_f)$ on $\RR_f$ provides $\F_{\ell}^{\RR_f}$ with the structure of a faithful $\Perm(\RR_f)$-module, which splits into a direct sum
\begin{equation}
 \label{splitting}
\F_{\ell}^{\RR_f}=\F_{\ell}\cdot {\bf 1}_{\RR_f}\oplus Q_{\RR_f}
\end{equation}
of the one-dimensional subspace $\F_{\ell}\cdot {\bf 1}_{\RR_f}$ of constant functions  and the $(n-1)$-dimensional {\sl heart} \cite{Klemm,Mortimer}
$$Q_{\RR_f}:=\{\phi:\RR_f \to \F_{\ell}\mid \sum_{\alpha\in\RR_f}\phi(\alpha)=0\}$$
(here we use that $n$ is not divisible by $\ell$). Clearly, the  $\Perm(\RR_f)$-module $ Q_{\RR_f}$ is faithful. It remains faithful if we view it as a $\Gal(f/K)$-module.  

\begin{rem}
 \label{selfDual}
 There is a nondegenerate $\Perm(\RR_f)$-invariant $\F_{\ell}$-bilinear pairing
$$\Psi: \F_{\ell}^{\RR_f} \times \F_{\ell}^{\RR_f} \to \F_{\ell}, \  \phi,\psi \mapsto \sum_{\alpha\in \RR_f}\phi(\alpha)\psi(\alpha)$$
and the splitting \eqref{splitting} is an orthogonal direct sum. Clearly, the restriction of $\Psi$ to $\F_2\cdot {\bf 1}_{\RR_f}$ is nondegenerate and therefore
the restriction of $\Psi$ to $Q_{\RR_f}$ is nondegenerate as well. This implies that the  $\Gal(f/K)$-module $Q_{\RR_f}$ and its  {\sl dual}
$\Hom_{\F_{\ell}}(Q_{\RR_f},\F_{\ell})$ are  {\sl isomorphic}. 
\end{rem}

The field inclusion $K(\RR_f)\subset K_s$ induces the {\sl surjective} continuous group homomorphism
$$\Gal(K)=\Gal(K_s/K)\twoheadrightarrow \Gal(K(\RR_f)/K)=\Gal(f/K),$$
which gives rise to the natural structure of the $\Gal(K)$-module on $Q_{\RR_f}$ such that the image of $\Gal(K)$ in $\Aut_{\F_{\ell}}(Q_{\RR_f})$ coincides with
$$\Gal(f/K)\subset \Perm(\RR_f)\hookrightarrow \Aut_{\F_{\ell}}(Q_{\RR_f}).$$  
In order to explain why the structure of the Galois module $Q_{\RR_f}$ is important, let us consider the subgroup (actually, the Galois submodule)
$$J^{(f,\ell)}[1-\delta_{\ell}]=\{z \in J^{(f,\ell)}(\bar{K})\mid \delta_l(z)=z\}$$
of $J^{(f,\ell)}(\bar{K})$. B. Poonen and E. Schaefer \cite{Poonen,SPoonen} observed that the Galois module $J^{(f,\ell)}[1-\delta_{\ell}]$ is a Galois submodule of
$J^{(f,\ell)}[\ell]$ and is isomorphic to $Q_{\RR_f}$. In particular,
$K(\RR_f)$ coincides with the {\sl field of definition} of all points of 
$J^{(f,\ell)}[1-\delta_{\ell}]$.

We will need the following  elementary assertion about homomorphisms of Galois modules related to $Q_{\RR_f}$.

\begin{lem}
\label{invTran}
Suppose that $f(x)$ is irreducible over $K$ and a prime $\ell$ does not divide $n$. Then:

\begin{itemize}
 \item [(i)]
$Q_{\RR_f}$ does not contain nonzero Galois-invariants.
\item [(ii)]
Every Galois-invariant linear functional $Q_{\RR_f} \to \F_{\ell}$ is zero.
\item [(iii)]
Let $W$ be a $\F_{\ell}$-vector space provided with the trivial action of $\Gal(K)$.
Then every homomorphism of  Galois modules $$Q_{\RR_f} \to W$$ is zero.
\item [(iv)]
Let $V$ be a finite-dimensional $\F_{\ell}$-vector space provided with a linear action of $\Gal(K)$ in such a way that every simple (Jordan-H\"older) subquotient of $V$ is a trivial Galois module. Then every homomorphism of the Galois modules $Q_{\RR_f} \to V$ is zero.
\item[(v)]
Let $V$ be a finite-dimensional $\F_{\ell}$-vector space provided with a linear action of $\Gal(K)$ in such a way that every simple (Jordan-H\"older) subquotient of $V$ is a trivial Galois module. 
Let $M$ be a finite-dimensional $\F_{\ell}$-vector space provided with a linear action of $\Gal(K)$ in such a way that there is a filtration
$$M_0 =\{0\} \subset M_1 \subset \dots \subset M_d =M$$
of $M$ by Galois submodules $M_i$ such that every quotient $M_{i+1}/M_i$ is isomorphic to $Q_{\RR_f}$.
Then every homomorphism of the Galois modules $M \to V$ is zero.
\end{itemize}
\end{lem}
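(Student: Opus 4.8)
The plan is to prove the five assertions in order, since each one bootstraps the next. First, for (i), I would observe that $\Gal(f/K)$ acts transitively on $\RR_f$ because $f$ is irreducible over $K$; hence the only $\Gal(f/K)$-invariant functions $\RR_f\to\F_\ell$ are the constant ones, i.e. the invariants of $\F_\ell^{\RR_f}$ form exactly the line $\F_\ell\cdot\mathbf{1}_{\RR_f}$. Since the decomposition \eqref{splitting} is a direct sum of Galois submodules and $Q_{\RR_f}$ meets $\F_\ell\cdot\mathbf{1}_{\RR_f}$ trivially, it follows that $(Q_{\RR_f})^{\Gal(K)}=0$. For (ii), I would invoke Remark \ref{selfDual}: the pairing $\Psi$ identifies the Galois module $\Hom_{\F_\ell}(Q_{\RR_f},\F_\ell)$ with $Q_{\RR_f}$ itself, so a Galois-invariant functional corresponds to a Galois-invariant vector in $Q_{\RR_f}$, which is zero by (i). (Alternatively one can just note $\Hom_{\F_\ell}(Q_{\RR_f},\F_\ell)$ is again a quotient of $\F_\ell^{\RR_f}$ with a transitive action, so its invariants are at most one-dimensional and in fact vanish on $Q_{\RR_f}$.)

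Next, (iii) follows from (ii) coordinate-wise: a Galois-equivariant map $\varphi\colon Q_{\RR_f}\to W$ with trivial action on $W$ is, after choosing a basis of $W$ (or just composing with an arbitrary functional on $W$), a tuple of Galois-invariant functionals on $Q_{\RR_f}$, each of which is zero by (ii); hence $\varphi=0$. For (iv) I would argue by induction on $\dim_{\F_\ell} V$, using a Jordan--Hölder filtration of $V$. Pick a simple Galois submodule $V_1\subset V$; by hypothesis it is a trivial module, and any map $Q_{\RR_f}\to V_1$ is zero by (iii). Given a homomorphism $\psi\colon Q_{\RR_f}\to V$, compose with $V\twoheadrightarrow V/V_1$; the quotient $V/V_1$ still has all simple subquotients trivial and smaller dimension, so by induction the composite is zero, i.e. $\psi(Q_{\RR_f})\subseteq V_1$; then $\psi=0$ by the base case. (The base case $\dim V\le 1$ is exactly (iii).)

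Finally, (v) is again an induction, this time on the length $d$ of the filtration $M_0\subset\cdots\subset M_d=M$. For $d=1$ we have $M\cong Q_{\RR_f}$ and the claim is precisely (iv). For the inductive step, let $\chi\colon M\to V$ be Galois-equivariant. Restrict $\chi$ to $M_{d-1}$, which carries a filtration of length $d-1$ with all quotients $\cong Q_{\RR_f}$; by induction $\chi|_{M_{d-1}}=0$, so $\chi$ factors through $M/M_{d-1}\cong Q_{\RR_f}$, and then $\chi=0$ by (iv) applied to that single quotient. I do not expect a serious obstacle here: the only genuinely substantive input is the transitivity coming from irreducibility of $f$ in (i) together with the self-duality of Remark \ref{selfDual} in (ii); everything after that is a formal dévissage. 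The one point to state carefully is that in (iv) and (v) a map out of $Q_{\RR_f}$ (respectively $M$) killing a submodule really does descend to the quotient as a map of Galois modules, which is immediate since all the maps and submodules in sight are $\Gal(K)$-equivariant.
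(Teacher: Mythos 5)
Your proposal is correct and follows essentially the same route as the paper: transitivity of the Galois action plus $\ell\nmid n$ for (i), the self-duality of $Q_{\RR_f}$ from Remark \ref{selfDual} for (ii), a choice of basis for (iii), and induction (dévissage) for (iv) and (v). The only cosmetic differences are that in (iv) you quotient by a simple submodule where the paper quotients by the full invariant subspace $V^{\Gal(K)}$, and in (v) you peel the filtration from the top rather than from the bottom; both variants are equivalent.
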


\begin{proof}
Recall that the irreducibility means that the Galois group acts transitively on $\RR_f$.
Let $\phi \in Q_{\RR_f}$ be a  Galois-invariant function on $\RR_f$. The transitivity implies that $\phi$ is   constant. This means that  there is $c \in \F_{\ell}$  such that
$\phi(\alpha)=c$ for all $\alpha\in \RR_f$ and therefore (since $\phi \in Q_{\RR_f}$)
$$0=\sum_{\alpha\in \RR_f}\phi(\alpha)=n \cdot c,$$ 
i.e., $c=0$.
 This means
that $\phi \equiv 0$, which proves (i).
In order to prove the second assertion of Lemma, recall (Remark \ref{selfDual}) that  the Galois modules $Q_{\RR_f}$ and $\Hom_{\F_{\ell}}(Q_{\RR_f},\F_{\ell})$ are isomorphic. 
Now the second assertion of our Lemma follows from the already proven first one.
On the other hand, the third assertion is an immmediate corollary of the second one: one has only to choose a basis of $W$. 
 In order to prove (iv), we will use induction by $\dim(V)$. We may assume that the subspace $V \ne \{0\}$. It follows from our assumptions on the Galois module $V$ that the subspace $V_0=V^{\Gal(K)}$ of Galois invariants is not $\{0\}$ as well. 
 If $u:  Q_{\RR_f} \to V$ is a homomorphism of Galois modules then the induction assumption applied to the quotient $V/V_0$ implies that
 the induced homomorphism of Galois modules
 $$Q_{\RR_f} \to V/V_0, \ \phi \mapsto u(\phi)+V_0$$
 is zero. This means that $u(Q_{\RR_f})\subset V$.
 Now the desired result follows from (iii) applied to
 $$Q_{\RR_f} \to V_0, \ \phi \mapsto u(\phi) \in V_0,$$
 because $\Gal(K)$ acts trivially on $V_0$.
 
 In order to prove (v), let us use induction by $d$. Let 
 $u: M \to V$ be a homomorphism of Galois modules. Since $M_1$ is isomorphic to $Q_{\RR_f}$,
 it follows from (iv) that $u(M_1)=\{0\}$, i.e., there is a  
 homomorphism of Galois modules $u_1: M/M_1 \to V$  such that $u$ is the composition of
 $$M \to M/M_1, \ m \mapsto m+M_1$$
 and $u_1$. If $d=1$ then we are done. If $d>1$ then the desired result follows from the induction assumption applied to the filtered Galois module
 $$M/M_1=(M/M_1)_{d-1}\supset \dots  \supset M_1/M_1=\{0\}=(M/M_0)_0.$$
\end{proof}

{Towse}

\section{Isogenous superelliptic jacobians}
\label{mainproof}

We will deduce Theorem \ref{isogEll} from the following auxiliary statements.

\begin{lem}[See Lemma 3.1  of \cite{ZarhinAGP}]
 \label{orbits}
 Let $G$ be a transitive permutation group of a finite nonempty set $\RR$, and $H$ a normal subgroup of $G$.
 Then the number of $H$-orbits in $\RR$ divides both $\#(\RR)$ and the index $(G:H)$. In particular, if 
 $\#(\RR)$ and $(G:H)$ are relatively prime then $H$ acts transitively on $\RR$.
 
 \end{lem}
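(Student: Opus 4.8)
The plan is to exploit the fact that, because $H$ is normal in $G$, the group $G$ permutes the set $\Omega$ of $H$-orbits in $\RR$. Indeed, for $g\in G$ and $x\in\RR$ one has $g\cdot(H\cdot x)=(gHg^{-1})\cdot(gx)=H\cdot(gx)$, so $g$ carries the $H$-orbit of $x$ to the $H$-orbit of $gx$; this defines an action of $G$ on $\Omega$. Since $G$ acts transitively on $\RR$, it acts transitively on $\Omega$ as well (given two $H$-orbits $O\ni x$ and $O'\ni x'$, pick $g\in G$ with $gx=x'$; then $gO$ is an $H$-orbit containing $x'$, hence $gO=O'$). The same computation shows that all $H$-orbits have the same cardinality $s$: with $g$ as above, $gO=O'$ forces $\#O=\#O'$.

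First I would record the divisibility $\#\Omega\mid\#(\RR)$. Writing $m=\#\Omega$ for the number of $H$-orbits, the partition of $\RR$ into $H$-orbits of common size $s$ gives $\#(\RR)=m\,s$, so $m\mid\#(\RR)$.

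Next I would establish $m\mid(G:H)$ by orbit–stabilizer applied to the transitive $G$-action on $\Omega$. Fix an $H$-orbit $O\in\Omega$ and let $S=\{g\in G: gO=O\}$ be its stabilizer in $G$. Transitivity gives $m=\#\Omega=(G:S)$. But $H\subseteq S$, because $H$ sends the $H$-orbit $O$ to itself; hence $(G:S)$ divides $(G:H)$, i.e. $m\mid(G:H)$. Finally, if $\#(\RR)$ and $(G:H)$ are relatively prime, then the common divisor $m$ of both must equal $1$, which says precisely that $H$ has a single orbit on $\RR$, i.e. $H$ acts transitively.

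There is essentially no serious obstacle here: the argument is elementary group theory, and the only point requiring a moment's care is the verification that $G$ genuinely acts on $\Omega$ — this is exactly where normality of $H$ enters, via $gHg^{-1}=H$. Everything else is orbit–stabilizer and counting.
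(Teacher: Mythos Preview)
Your argument is correct and is the standard elementary proof: use normality to get a well-defined transitive $G$-action on the set of $H$-orbits, conclude the orbits all have the same size (giving $m\mid\#(\RR)$), and apply orbit--stabilizer together with $H\subseteq\mathrm{Stab}_G(O)$ to get $m\mid(G:H)$. The paper itself does not supply a proof here --- it simply records the statement and cites Lemma~3.1 of \cite{ZarhinAGP} --- so there is nothing to compare; your write-up fills in exactly the kind of argument one would expect at that reference.
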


\begin{lem}[See Lemma 3.2  of \cite{ZarhinAGP}]
 \label{remainIrr} Let $f(x)$ be a degree $n$ irreducible polynomial over a field $K$ and without repeated roots.
 Let $K_1/K$ be a finite Galois field extension, whose degree is prime to $n$.
 Then $f(x)$ remains irreducible over $K_1$. In particular, the order of Galois group $\Gal(f/K_1)$ is divisible by $n$.
 \end{lem}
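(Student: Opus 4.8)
The plan is to reduce everything to a single root of $f$ and to exploit multiplicativity of field degrees; the Galois hypothesis on $K_1/K$ will turn out to be inessential for the irreducibility statement, though it makes the final count of $\#\Gal(f/K_1)$ transparent.

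First I would fix a root $\alpha\in\RR_f$ and work inside $\bar{K}$. Since $f$ is irreducible of degree $n$ over $K$, we have $[K(\alpha):K]=n$. Put $m=[K_1(\alpha):K_1]$, where $K_1(\alpha)$ denotes the compositum of $K_1$ and $K(\alpha)$ inside $\bar{K}$. Because $f(\alpha)=0$ and $f\in K[x]\subseteq K_1[x]$, the minimal polynomial of $\alpha$ over $K_1$ divides $f$, whence $m\mid n$. Next I would compute $[K_1(\alpha):K]$ along the two towers $K\subseteq K(\alpha)\subseteq K_1(\alpha)$ and $K\subseteq K_1\subseteq K_1(\alpha)$: the tower law gives $[K_1(\alpha):K]=[K_1(\alpha):K(\alpha)]\cdot n=m\cdot[K_1:K]$, so in particular $n\mid m\cdot[K_1:K]$. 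Since $\gcd(n,[K_1:K])=1$, this forces $n\mid m$, and combined with $m\mid n$ we get $m=n$. Thus the minimal polynomial of $\alpha$ over $K_1$ has degree $n=\deg f$ and hence equals $f$ up to a scalar, i.e.\ $f$ is irreducible over $K_1$.

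For the last assertion I would note that $f$ has no repeated roots, so $K_1(\RR_f)/K_1$ is a (separable) Galois extension and $\Gal(f/K_1)=\Gal(K_1(\RR_f)/K_1)$ acts transitively on the $n$-element set $\RR_f$ by the irreducibility just proved; by the orbit–stabilizer theorem $n$ divides $\#\Gal(f/K_1)$. Equivalently, $K_1(\alpha)\subseteq K_1(\RR_f)$ and $n=[K_1(\alpha):K_1]$ divides $[K_1(\RR_f):K_1]=\#\Gal(f/K_1)$.

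I do not expect a genuine obstacle here; the argument is elementary. If one instead wishes to route the proof through the group-theoretic Lemma \ref{orbits} (as is natural given its placement), one can observe that the splitting field of $f$ over $K_1$ is $L=K_1\cdot K(\RR_f)$, that restriction to $K(\RR_f)$ yields an isomorphism $\Gal(L/K_1)\cong\Gal(K(\RR_f)/K')$ with $K'=K_1\cap K(\RR_f)$, and that $K'/K$ is Galois (being the intersection of the two Galois extensions $K_1/K$ and $K(\RR_f)/K$) with $[K':K]$ dividing $[K_1:K]$, hence prime to $n$; then $\Gal(K(\RR_f)/K')$ is a normal subgroup of $\Gal(f/K)$ of index prime to $n$, so Lemma \ref{orbits} shows it still acts transitively on $\RR_f$. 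The only step requiring care in that variant is checking that $K'/K$ is Galois so that the normality hypothesis of Lemma \ref{orbits} is met.
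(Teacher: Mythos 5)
Your degree-counting argument is correct and complete, and, as you note, it does not actually use that $K_1/K$ is Galois — only that $[K_1:K]$ is prime to $n$: from $n=[K(\alpha):K]\mid[K_1(\alpha):K]=[K_1(\alpha):K_1]\cdot[K_1:K]$ together with $\gcd(n,[K_1:K])=1$ one gets $n\mid[K_1(\alpha):K_1]$, while ``the minimal polynomial of $\alpha$ over $K_1$ divides $f$'' gives the reverse divisibility, so $[K_1(\alpha):K_1]=n$ and $f$ is irreducible over $K_1$. The paper itself does not reproduce a proof here (it refers to Lemma 3.2 of \cite{ZarhinAGP}), but the placement immediately after Lemma \ref{orbits} and the Galois hypothesis on $K_1/K$ make clear that the intended argument is the group-theoretic variant you sketch at the end: set $K'=K_1\cap K(\RR_f)$, observe $K'/K$ is Galois (intersection of Galois extensions), so $\Gal(K(\RR_f)/K')$ is a normal subgroup of $\Gal(f/K)$ of index $[K':K]$ prime to $n$, apply Lemma \ref{orbits} to get transitivity on $\RR_f$ (hence irreducibility of $f$ over $K'$), and transport this to $K_1$ via the restriction isomorphism $\Gal(K_1(\RR_f)/K_1)\cong\Gal(K(\RR_f)/K')$. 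So your primary route is both more elementary and mildly more general, while your secondary route matches the paper's toolbox and explains why the Galois hypothesis was included. The final deduction $n\mid\#\Gal(f/K_1)$ by transitivity plus orbit--stabilizer is standard and correct.
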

 
 \begin{lem}
 \label{nilpotent}
 Let $\ell$ be a prime, $F$ a field of  characteristic $\ell$, and $V$ a finite-dimensional vector space over $F$. Let $\mathcal{N}\subset \End_F(V)$ be a 
 linear nilpotent Lie subalgebra of $\End_F(V)$, and
 $$V^{\mathcal{N}}:=\{v \in V \mid x(v)=0 \ \forall x \in \mathcal{N}\}.$$
  Let $\sigma$ be a linear automorphism of finite order in $V$ that commutes with all linear operators from $\mathcal{N}$ and such  that the subspace $V^{\sigma}$ of all $\sigma$-invariants in $V$ contains $V^{\mathcal{N}}$.
 Then the order of $\sigma$ is either $1$ or a power of $\ell$.

 \end{lem}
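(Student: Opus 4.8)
The plan is to analyze the action of $\sigma$ on the ascending "socle-type" filtration attached to $\mathcal{N}$. Since $\mathcal{N}$ is a nilpotent Lie algebra of linear operators acting on a finite-dimensional vector space $V$ over a field $F$ of characteristic $\ell$, Engel's theorem (applied to the associative subalgebra of $\End_F(V)$ generated by $\mathcal{N}$, or directly in its Lie form) tells us that $\mathcal{N}$ stabilizes a complete flag in $V$ in which every operator of $\mathcal{N}$ acts by strictly upper-triangular matrices; in particular $V^{\mathcal{N}} \ne \{0\}$ as soon as $V \ne \{0\}$. More to the point, I would introduce the filtration $0 = W_0 \subset W_1 \subset \dots \subset W_m = V$ defined inductively by $W_{i+1}/W_i = (V/W_i)^{\mathcal{N}}$. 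Because each operator in $\mathcal{N}$ is nilpotent on $V$ and they form a Lie algebra, Engel's theorem guarantees $W_{i+1}/W_i \ne \{0\}$ whenever $W_i \ne V$, so the filtration is exhausting; and each $W_i$ is visibly $\mathcal{N}$-stable.

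Next I would use the hypothesis that $\sigma$ commutes with every element of $\mathcal{N}$: this makes each subspace $W_i$ automatically $\sigma$-invariant, since $W_{i+1}/W_i$ is characterized by a condition (being killed by $\mathcal{N}$) that is preserved by any operator commuting with $\mathcal{N}$. So $\sigma$ induces automorphisms $\bar\sigma_i$ on each graded piece $W_{i+1}/W_i$. The key observation is that each graded piece $W_{i+1}/W_i$, being the $\mathcal{N}$-invariants of $V/W_i$, can be handled by the hypothesis that $V^\sigma \supseteq V^{\mathcal{N}}$: I would argue by induction on $\dim V$ that $\sigma$ acts trivially on each $W_{i+1}/W_i$. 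For the bottom step, $W_1 = V^{\mathcal{N}} \subseteq V^\sigma$ by hypothesis, so $\sigma$ is the identity on $W_1$. For the inductive step, I would pass to $V/W_1$: the image $\mathcal{N}'$ of $\mathcal{N}$ in $\End_F(V/W_1)$ is again a nilpotent linear Lie algebra, $\sigma$ induces $\sigma'$ on $V/W_1$ commuting with $\mathcal{N}'$ and of finite order, and one checks $(V/W_1)^{\sigma'} \supseteq (V/W_1)^{\mathcal{N}'}$ — this containment is the one delicate point and is where I expect the main obstacle to lie, since invariants do not in general behave well under quotients. I would establish it by noting that the preimage in $V$ of $(V/W_1)^{\mathcal{N}'}$ is exactly $W_2$, and then showing directly (using $\sigma|_{W_1} = \mathrm{id}$ together with $V^\sigma \supseteq V^{\mathcal{N}} = W_1$ and a short argument with the operator $\sigma - 1$, which maps $W_2$ into $W_1$ and must be checked to vanish there) that $\sigma$ acts trivially on $W_2/W_1$. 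Granting this, induction finishes the claim that $\sigma$ is unipotent, i.e. $(\sigma - 1)$ is nilpotent on $V$.

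Finally, once $\sigma$ is unipotent over a field of characteristic $\ell$, I conclude as follows: a unipotent automorphism of finite order must have order a power of $\ell$. Indeed $\sigma = 1 + N$ with $N$ nilpotent, say $N^{\ell^k} = 0$ for $\ell^k \ge \dim V$ (more precisely for $\ell^k$ at least the nilpotency index of $N$); then $\sigma^{\ell^k} = (1+N)^{\ell^k} = 1 + N^{\ell^k} = 1$ by the Frobenius/freshman's-dream identity in characteristic $\ell$, so the order of $\sigma$ divides $\ell^k$ and is therefore $1$ or a power of $\ell$. This gives the conclusion of the Lemma.
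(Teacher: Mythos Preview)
Your proposal is correct but takes a genuinely different route from the paper. The paper argues as follows: replace $\sigma$ by a suitable power so that its order is prime to $\ell$, hence $\sigma$ is semisimple and $V = V^{\sigma} \oplus W$ with $W = (1-\sigma)V$; since $\mathcal{N}$ commutes with $\sigma$, the subspace $W$ is $\mathcal{N}$-invariant, and if $W \ne \{0\}$ Engel's theorem produces a nonzero $w \in W \cap V^{\mathcal{N}} \subseteq W \cap V^{\sigma} = \{0\}$, a contradiction. You instead prove directly that $\sigma$ is unipotent via the $\mathcal{N}$-socle filtration and induction on $\dim V$, then finish with the freshman's dream. Your ``delicate point'' is genuine but resolves exactly as you suggest: for $v \in W_2$ and $x \in \mathcal{N}$ one has $x(\sigma v - v) = \sigma(xv) - xv = 0$ because $xv \in W_1 \subseteq V^{\sigma}$, so $(\sigma-1)v \in V^{\mathcal{N}} = W_1$; this gives $(V/W_1)^{\mathcal{N}'} \subseteq (V/W_1)^{\sigma'}$ and the induction runs. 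The paper's proof is shorter and applies Engel only once, exploiting the semisimple/unipotent dichotomy; your approach avoids the prime-to-$\ell$ reduction and yields the slightly sharper statement that $\sigma$ is unipotent, at the cost of a longer inductive argument.
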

 
 \begin{rem} We will apply the easy ``commutative'' case of  Lemma \ref{nilpotent} to $F=\F_{\ell}$,  $V=X[\ell]$, 
 and $$\sigma \in \Gal(K(X[\ell]/K(X[1-\delta_{\ell}]))\subset 
 \Aut_{\F_{\ell}}(X[\ell])$$
 where $X=J^{(f,\ell)}$.
 \end{rem}

 \begin{proof}[Proof of Lemma \ref{nilpotent}]
 Replacing $\sigma$ by its suitable power, we may assume that $\sigma$ is a  periodic automorphism, whose order is prime to $\ell$. We need to  prove that $\sigma$ is the identity map.
 
 Since $\sigma$ is obviously a semisimple linear operator, $V$ splits into a direct sum of $\sigma$-invariant subspaces
 $$V=V^{\sigma} \oplus W \ \text{ where } W=(1-\sigma)V.$$
 In particular,
 $$V^{\sigma} \cap W =\{0\}.$$
 Assume that $W \ne \{0\}$. We need to arrive to a contradiction.
 Since $\mathcal{N}$ commutes with $\sigma$, the subspace $W=(1-\sigma)V$ is $\mathcal{N}$-invariant. By Engel's theorem, there is a {\sl nonzero} vector $w \in W$ that is killed all linear operators from $\mathcal{N}$, i.e., $w \in V^{\mathcal{N}}$.
 This implies that $w \in W \in V^{\sigma}$. Hence,
 $$w \in V^{\sigma} \cap W =\{0\}.$$
 This implies that $w=0$, which gives us a desired contradiction.

 \end{proof}

 \begin{cor}
 \label{ellGroup}
 Let $X=J^{(f,\ell)}$. Then:
 \begin{itemize}
  \item [(i)]

 $K(X[\ell])/K(\RR_f)$ is a finite Galois $\ell$-extension, i.e., either $K(X[\ell])=K(\RR_f)$ or the Galois group $\Gal(K(X[\ell])/K(\RR_f))$ is a finite $\ell$-group. In particular, if the order of $\Gal(f/K)$ is prime to $n$ then the order of $\tilde{G}_{\ell,X,K}$ is also prime to $n$.
 \item [(ii)]
 The Galois module $X[\ell]$ admits a filtration
 $$M_{0}=\{0\}\subset M_{1}\subset \dots \subset M_{\ell-1}=X[\ell],$$
 such that each consecutive quotient $M_{i+1}/M_i$ is isomorphic to the Galois module $Q_{\RR_f}$.
 \end{itemize}
 \end{cor}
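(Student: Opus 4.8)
The plan is to analyze the operator $N:=\delta_{\ell}-1$ acting on the $\F_{\ell}$-vector space $X[\ell]$, where $X=J^{(f,\ell)}$ and $\delta_{\ell}$ is the canonical automorphism of period $\ell$. Since $\mathcal{P}_{\ell}(\delta_{\ell})=0$ in $\End(X)$ by \eqref{deltaEquation}, and in $\F_{\ell}[t]$ one has $(t-1)\,\mathcal{P}_{\ell}(t)=t^{\ell}-1=(t-1)^{\ell}$, hence $\mathcal{P}_{\ell}(t)=(t-1)^{\ell-1}$, the operator $N$ on $X[\ell]$ satisfies $N^{\ell-1}=0$; in particular $N$ is nilpotent. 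As $N$ is the reduction of an endomorphism defined over $K$, it commutes with the $\Gal(K)$-action, so each $M_i:=\ker\!\big(N^{i}\mid X[\ell]\big)$ is a Galois submodule and
$$\{0\}=M_0\subseteq M_1\subseteq\dots\subseteq M_{\ell-1}=X[\ell].$$
If $\delta_{\ell}z=z$ for some $z\in X(\bar K)$, then $\ell z=\mathcal{P}_{\ell}(\delta_{\ell})z=0$, so $z\in X[\ell]$; thus $M_1=\ker N=X[1-\delta_{\ell}]$, which by the Poonen--Schaefer description recalled above in \S\ref{QRR} is isomorphic to the $(n-1)$-dimensional Galois module $Q_{\RR_f}$ and has field of definition exactly $K(\RR_f)$.

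For (i), put $L:=K(\RR_f)$; then $L\subseteq K(X[\ell])$, so $K(X[\ell])=L(X[\ell])$ and $\Gal(K(X[\ell])/L)=\tilde{G}_{\ell,X,L}$. For any $\sigma\in\Gal(K(X[\ell])/L)$ we apply Lemma \ref{nilpotent} with $F=\F_{\ell}$, $V=X[\ell]$, this $\sigma$, and the one-dimensional (hence abelian, hence nilpotent) Lie algebra of nilpotent operators $\mathcal{N}=\F_{\ell}\cdot N$: indeed $\sigma$ has finite order, it commutes with $N$ (which is defined over $K$), and $V^{\mathcal{N}}=\ker N=M_1$ consists of points defined over $L$ and is therefore pointwise fixed by $\sigma$, i.e.\ $V^{\mathcal{N}}\subseteq V^{\sigma}$. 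Lemma \ref{nilpotent} then gives that $\ord(\sigma)$ is $1$ or a power of $\ell$; since this holds for every element of the finite group $\Gal(K(X[\ell])/L)$, that group is an $\ell$-group (the trivial group included), which is the first assertion of (i). For the ``in particular'': if $\#\Gal(f/K)$ is prime to $n$, then $\#\tilde{G}_{\ell,X,K}=[K(X[\ell]):K]=[K(X[\ell]):L]\cdot[L:K]$ is the product of a power of $\ell$ (prime to $n$ since $\ell\nmid n$) and $\#\Gal(f/K)$ (prime to $n$), hence prime to $n$.

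For (ii), observe that for $1\le i\le\ell-2$ the operator $N$ induces a homomorphism of Galois modules
$$\bar{N}_i:\ M_{i+1}/M_i\longrightarrow M_i/M_{i-1},\qquad v+M_i\ \longmapsto\ Nv+M_{i-1},$$
which is well defined and injective (both facts being immediate from $N^{j}v=N^{j-1}(Nv)$) and $\Gal(K)$-equivariant since $N$ is. Hence the dimensions $d_i:=\dim_{\F_{\ell}}(M_{i+1}/M_i)$ satisfy $n-1=d_0\ge d_1\ge\dots\ge d_{\ell-2}\ge 0$; as there are exactly $\ell-1$ of them and $\sum_{i=0}^{\ell-2}d_i=\dim_{\F_{\ell}}X[\ell]=(n-1)(\ell-1)$, each $d_i$ equals $n-1$ and every $\bar{N}_i$ is an \emph{isomorphism} of Galois modules. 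Induction on $i$ starting from $M_1\cong Q_{\RR_f}$ then yields $M_{i+1}/M_i\cong Q_{\RR_f}$ for all $i$, so the $M_i$ above are precisely the filtration asserted in (ii). (One also recovers (i) this way: over $L$ all graded pieces are isomorphic to the trivial $\Gal(L)$-module $Q_{\RR_f}$, so $\tilde{G}_{\ell,X,L}$ is conjugate into the upper unitriangular subgroup of $\GL_{(n-1)(\ell-1)}(\F_{\ell})$, which is an $\ell$-group.) The argument is essentially routine and there is no serious obstacle; the one point that must be gotten right is the sharp relation $N^{\ell-1}=0$ (not merely $N^{\ell}=0$), since it is the combination of the exact length $\ell-1$ of the filtration with $\dim M_1=n-1$ and $\dim X[\ell]=(n-1)(\ell-1)$ that pins every graded piece down to all of $Q_{\RR_f}$.
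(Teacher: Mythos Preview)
Your proof is correct and follows essentially the same strategy as the paper's: both exploit the nilpotent operator $1-\delta_{\ell}$ (your $-N$) on $X[\ell]$, invoke Lemma~\ref{nilpotent} for part (i), and filter $X[\ell]$ by the kernels of its powers for part (ii). The one substantive difference lies in how you show that every graded piece has dimension exactly $n-1$. The paper cites Ribet to obtain that $T_{\ell}(X)$ is a free $\Z_{\ell}[\zeta_{\ell}]$-module of rank $n-1$, hence $X[\ell]$ is free of rank $n-1$ over $\Z[\delta_{\ell}]/\ell\cong\F_{\ell}[t]/(t-1)^{\ell-1}$; the maps $(1-\delta_{\ell})^{i}$ then visibly induce isomorphisms between the graded pieces. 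You instead run a self-contained dimension count: the injections $\bar N_i$ force $n-1=d_0\ge d_1\ge\dots\ge d_{\ell-2}$ with $\sum_i d_i=\dim_{\F_{\ell}}X[\ell]=(n-1)(\ell-1)$, so every $d_i=n-1$ and each $\bar N_i$ is an isomorphism. Your route is more elementary and avoids the external reference; the paper's is structurally cleaner once freeness is in hand and also yields the identity $X[1-\delta_{\ell}]=(1-\delta_{\ell})^{\ell-2}X[\ell]$ as a byproduct.
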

 
 \begin{proof}
 Let us put 
 $$X[1-\delta_{\ell}]:=J^{(f,\ell)}[1-\delta_{\ell}]\subset 
 J^{(f,\ell)}[\ell]=X[\ell].$$
 The ring $\Z[\delta_{\ell}]\otimes \Z_{\ell}=:\Z_{\ell}[\delta_{\ell}]$ acts naturally on the
$\ell$-adic Tate module $\mathrm{T}_{\ell}(X)$ of $X$. It is known \cite{Ribet}
that $\mathrm{T}_{\ell}(X)$ is a free $\Z_{\ell}[\zeta_{\ell}]$-module of  rank
$$\frac{2\dim(J^{(f,\ell)})}{[\Q(\zeta_{\ell}):\Q]}=\frac{(n-1)(\ell-1)}{(\ell-1)}=
n-1.$$ 
In particular, the natural ring homomorphism
$$\Psi_{\ell}:\Z[\delta_{\ell}]/\ell=\Z_{\ell}[\delta_{\ell}]/\ell \to \End_{\F_{\ell}}(X[\ell])$$
is a ring {\sl embedding} that makes $X[\ell]$ a free $\Z[\delta_{\ell}]/\ell$-module of rank $n-1$. This implies that
$$X[1-\delta_{\ell}]=(1-\delta_{\ell})^{\ell-2}X[\ell],$$
because in the cyclotimic ring $\Z[\zeta_{\ell}]$ we have the equalities of ideals
$$\ell\Z[\zeta_{\ell}]=(1-\zeta_{\ell})^{\ell-1}\Z[\zeta_{\ell}], \quad
(1-\zeta_{\ell})^{\ell-2}\Z[\zeta_{\ell}]=\{z \in \Z[\zeta_{\ell}]\mid (1-\zeta_{\ell})z \in \ell \Z[\zeta_{\ell}]\}.$$
Now first assertion of (i) follows from Lemma \ref{nilpotent} applied 
to $$F=\F_{\ell},  \ V=X[\ell], \
 \mathcal{N}=\Psi_{\ell}((1-\delta_{\ell})\Z[\delta_{\ell}]/\ell),$$
 and $$\sigma \in \Gal(K(X[\ell]/K(X[1-\delta_{\ell}))\subset \Aut_{\F_{\ell}}(X[\ell]).$$
 The second one follows readily from the equality
$$[K(Y[\ell]):K]=[K(Y[\ell]):K(\RR_h)] \cdot [K(\RR_h):K] \ \text{)}$$
(recall that the prime $\ell$ does {\sl not} divide $n$).

 In order to prove (ii), let us put
 $$M_i:=(1-\delta_{\ell})^{\ell-1-i}X[\ell]\subset X[\ell].$$
 The freeness of the $\Z[\delta_{\ell}]/\ell$-module $X[\ell]$ implies that
 $M_i$ coincides with the kernel of
 $$(1-\delta_{\ell})^i: X[\ell] \to X[\ell].$$
 In particular,
 $$M_0=\{0\}, \ M_1=X[1-\delta_{\ell}] \cong Q_{\RR_f}, \  M_{\ell-1}=X[\ell].$$
 It is also clear that $(1-\delta_{\ell})^i$ induces an isomorphism of Galois modules
 $M_{i+1}/M_i$ and $M_1/M_0 \cong Q_{\RR_f}$. This ends the proof of (ii).
 
 \end{proof}
 \begin{lem}
 \label{isogenyNotInv}
 We keep the notation and assumptions of Theorem \ref{isogEll}.
 
 Suppose that  $K=K(Y[\ell])$.  Then there is a nontrivial group homomorphism
 $$\chi: \Gal(K(X[\ell])/K) \to \End^0(Y)^{*},$$
 whose image
 $$\Gamma:=\mathrm{Im}(\chi)\subset \End^0(Y)^{*}$$
 is a finite group that enjoys the following property.  
 
 The integers $n$ and  $\#(\Gamma)$ are not relatively prime.
 In other words,
 there is a prime $r \ne \ell$ that divides both $n$ and  $\#(\Gamma)$.
 In particular,   both endomorphism algebras $\End^0(J^{(f,\ell)})$ and  $\End^0(Y)$
contain an invertible element of multiplicative order $r$.
  \end{lem}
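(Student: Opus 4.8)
The plan is to produce $\chi$ from a single isogeny $Y\to X$ together with a Galois-twisting cocycle, and then to extract divisibility by a prime $r\mid n$ from the filtration of $X[\ell]$ by copies of $Q_{\RR_f}$.

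First I would fix, using Chow's theorem, an isogeny $\psi_0\colon Y\to X$ defined over $K_s$, and, after dividing out powers of $[\ell]_Y$ if necessary, arrange that $\psi_0$ is $\ell$-primitive, i.e.\ does not factor through $[\ell]_Y$; equivalently, the reduction $\bar\psi_0\colon Y[\ell]\to X[\ell]$ is nonzero. Since $K(Y[\ell])=K$ and $\ell\ge 3$, Silverberg's theorem (Remark \ref{silver}) applied to $Y$ shows that $\End(Y)$, hence $\End^0(Y)$, is defined over $K$, so $\Gal(K)$ acts trivially on $\End^0(Y)$; applied to $X\times Y$ it shows, since $K((X\times Y)[\ell])=K(X[\ell])\cdot K(Y[\ell])=K(X[\ell])$, that $\psi_0$ is already defined over $K(X[\ell])$. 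Now for $\sigma\in\Gal(K)$ I set $c(\sigma):=\psi_0^{-1}\circ{}^{\sigma}\psi_0\in\End^0(Y)^{*}$, where $\psi_0^{-1}\in\Hom(X,Y)\otimes\Q$ is the quasi-inverse. From ${}^{\sigma}\psi_0=\psi_0\circ c(\sigma)$ and the triviality of the $\Gal(K)$-action on $\End^0(Y)$ one gets $c(\sigma\tau)=c(\sigma)c(\tau)$, so $c$ is a homomorphism $\Gal(K)\to\End^0(Y)^{*}$; as $\psi_0$ is defined over $K(X[\ell])$ it kills $\Gal(K_s/K(X[\ell]))$ and descends to the required $\chi\colon \Gal(K(X[\ell])/K)\to\End^0(Y)^{*}$. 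Its kernel is $\Gal(K(X[\ell])/L)$, with $L\subseteq K(X[\ell])$ the field of definition of $\psi_0$; hence $L/K$ is a finite Galois extension, $\Gamma:=\mathrm{Im}(\chi)\cong\Gal(L/K)$ is finite, and $\#\Gamma=[L:K]$.

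Next I would prove $\gcd(n,\#\Gamma)\ne1$ by contradiction. Suppose $[L:K]=\#\Gamma$ is prime to $n$ (this includes the case $\chi$ trivial, $L=K$). Then by Lemma \ref{remainIrr} the polynomial $f$ stays irreducible over $L$, so Lemma \ref{invTran} applies over $L$; in particular $Q_{\RR_f}$ has no nonzero $\Gal(L)$-invariant vector (part (i)), and together with the $\Gal(K)$-module (a fortiori $\Gal(L)$-module) filtration of $X[\ell]$ with successive quotients $\cong Q_{\RR_f}$ from Corollary \ref{ellGroup}(ii), an immediate induction gives $X[\ell]^{\Gal(L)}=0$. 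On the other hand $\psi_0$ is defined over $L$, so $\bar\psi_0\colon Y[\ell]\to X[\ell]$ is a homomorphism of $\Gal(L)$-modules, and $Y[\ell]$ is a trivial $\Gal(L)$-module because $K(Y[\ell])=K\subseteq L$; hence the image of $\bar\psi_0$ lies in $X[\ell]^{\Gal(L)}=0$, so $\bar\psi_0=0$, contradicting $\ell$-primitivity. Therefore some prime $r$ divides both $n$ and $\#\Gamma$, necessarily $r\ne\ell$ since $\ell\nmid n$; in particular $\Gamma\ne\{1\}$, so $\chi$ is nontrivial. Finally, by Cauchy's theorem the finite group $\Gamma\subset\End^0(Y)^{*}$ contains an element of multiplicative order $r$, and any $\bar K$-isogeny $X\to Y$ induces an isomorphism of $\Q$-algebras $\End^0(X)\cong\End^0(Y)$, so $\End^0(J^{(f,\ell)})=\End^0(X)$ contains an invertible element of multiplicative order $r$ as well.

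The main obstacle, I expect, is the construction step: seeing that the hypothesis $K(Y[\ell])=K$ is precisely what is needed so that Silverberg's theorem forces $\End(Y)$ down to $K$ and $\psi_0$ down to $K(X[\ell])$, and thereby converts the a priori $\End^0(Y)^{*}$-valued $1$-cocycle attached to $\psi_0$ into an honest homomorphism factoring through $\Gal(K(X[\ell])/K)$. A secondary, but harmless, point is passing to an $\ell$-primitive $\psi_0$: no control over $\gcd(\deg\psi_0,\ell)$ is required, because the contradiction only uses that $\bar\psi_0$ is a \emph{nonzero} $\Gal(L)$-homomorphism out of a trivial module, not that it be an isomorphism. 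The divisibility itself then falls out of the clean dichotomy ``$[L:K]$ prime to $n$ $\Rightarrow$ $f$ irreducible over $L$ $\Rightarrow$ $X[\ell]^{\Gal(L)}=0$'' versus the existence of the nonzero $\Gal(L)$-map $\bar\psi_0$ from the trivial module $Y[\ell]$ into $X[\ell]$.
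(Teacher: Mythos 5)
Your proposal is correct, and its skeleton coincides with the paper's: fix an isogeny, normalize it so that its reduction mod $\ell$ is nonzero, use Silverberg's theorem (with $K(Y[\ell])=K$) to get the triviality of the Galois action on $\End^0(Y)$ and the definability of the isogeny over $K(X[\ell])$, turn the twisting cocycle into an honest homomorphism $\chi$, and finally play the filtration of $X[\ell]$ by copies of $Q_{\RR_f}$ against the triviality of $Y[\ell]$. The differences are in the endgame and are genuine improvements in economy. First, you orient the isogeny as $\psi_0\colon Y\to X$, so the contradiction comes from ``a nonzero map out of a trivial module lands in $X[\ell]^{\Gal(L)}=\{0\}$''; the paper uses $\mu\colon X\to Y$ and instead invokes Lemma \ref{invTran}(v) (no nonzero map from the filtered module to a trivial one), which needs the self-duality of $Q_{\RR_f}$ (Remark \ref{selfDual}) — your direction only needs part (i) of that lemma plus the evident induction along the filtration. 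Second, and more substantively, you bypass Proposition \ref{GneH} and Lemma \ref{orbits} entirely: rather than showing the image $H$ of $\ker\chi$ in $\Gal(f/K)$ is intransitive and then counting orbits, you argue by contraposition — if $\#\Gamma=[L:K]$ were prime to $n$, Lemma \ref{remainIrr} would keep $f$ irreducible over $L$ and the module argument would kill $\bar\psi_0$. Since Lemma \ref{remainIrr} is itself a consequence of the orbit-counting Lemma \ref{orbits}, the two routes rest on the same group-theoretic input, but yours reaches the conclusion in one step and avoids the auxiliary field-theoretic bookkeeping ($E=F^{H}$, lifting $\sigma$ to $\sigma_\ell$, etc.) in the proof of Proposition \ref{GneH}. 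All the individual steps you use (termination of the $\ell$-division, $c(\sigma\tau)=c(\sigma)c(\tau)$ from the trivial action on $\End^0(Y)$, $r\ne\ell$ from $\ell\nmid n$, Cauchy's theorem, transport of the order-$r$ element through $\End^0(X)\cong\End^0(Y)$) check out.
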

  
  \begin{lem}
  \label{Qln}
  Let $\ell$ and $r$ be two distinct odd primes.
  Let $\mathcal{V}$ be a nonzero finite-dimensional vector space over $\Q$. Let $A$ and $B$ be two commuting automorphisms of $\mathcal{V}$ that enjoy the following properties.
  \begin{itemize}
  \item[(i)]
  $A^{\ell}=B^r=\mathrm{1}_{\mathcal{V}}$ where $\mathrm{1}_{\mathcal{V}}$ is the identity automorphism of $\mathcal{V}$.
  \item[(ii)]
  $A-\mathrm{1}_{\mathcal{V}}$ is an automorphism of $\mathcal{V}$.
  
   \item[(iii)]
   $B \ne \mathrm{1}_{\mathcal{V}}$.
  \end{itemize}
  Then $\dim_{\Q}(\mathcal{V}) \ge (\ell-1)(r-1)$.
  
  \end{lem}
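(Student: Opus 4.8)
The plan is to pass to a suitable nonzero $A$- and $B$-invariant subspace of $\mathcal{V}$ on which the commuting operators $A,B$ give it the structure of a vector space over the compositum $\Q(\zeta_{\ell r})=\Q(\zeta_\ell)\cdot\Q(\zeta_r)$, and then read off the bound from $[\Q(\zeta_{\ell r}):\Q]=(\ell-1)(r-1)$.

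First I would analyze $B$. Since $t^r-1=(t-1)\mathcal{P}_r(t)$ is a product of two coprime polynomials over $\Q$ and $B^r=\mathrm{1}_{\mathcal{V}}$, there is a direct sum decomposition $\mathcal{V}=\ker(B-\mathrm{1}_{\mathcal{V}})\oplus\mathcal{V}'$ with $\mathcal{V}':=\ker\mathcal{P}_r(B)$; hypothesis (iii) forces $\mathcal{V}'\ne\{0\}$, and since $A$ commutes with $B$, the subspace $\mathcal{V}'$ is $A$-invariant. As $\mathcal{P}_r$ is irreducible over $\Q$ and annihilates $B|_{\mathcal{V}'}$ on the nonzero space $\mathcal{V}'$, it is the minimal polynomial of $B|_{\mathcal{V}'}$; hence $B$ makes $\mathcal{V}'$ a vector space over $\Q[t]/(\mathcal{P}_r(t))\cong\Q(\zeta_r)$.

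Next I would analyze $A':=A|_{\mathcal{V}'}$, which is $\Q(\zeta_r)$-linear because $A$ and $B$ commute. Since $\mathcal{V}'$ is $A$-invariant, hypothesis (ii) gives that $A'-\mathrm{1}_{\mathcal{V}'}$ is an automorphism of $\mathcal{V}'$; combining this with $(A'-\mathrm{1}_{\mathcal{V}'})\,\mathcal{P}_\ell(A')=A'^\ell-\mathrm{1}_{\mathcal{V}'}=0$ yields $\mathcal{P}_\ell(A')=0$. The key point is that $\mathcal{P}_\ell$ remains irreducible over $\Q(\zeta_r)$: this follows from $\gcd(\ell,r)=1$, since then $\Q(\zeta_\ell)\cap\Q(\zeta_r)=\Q$ and $\Q(\zeta_\ell,\zeta_r)=\Q(\zeta_{\ell r})$ has degree $\phi(\ell r)=(\ell-1)(r-1)$ over $\Q$, so $[\Q(\zeta_r)(\zeta_\ell):\Q(\zeta_r)]=\ell-1=\deg\mathcal{P}_\ell$ and $\mathcal{P}_\ell$ is the minimal polynomial of $\zeta_\ell$ over $\Q(\zeta_r)$. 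Hence $\mathcal{P}_\ell$ is the minimal polynomial of $A'$ on the nonzero space $\mathcal{V}'$, and $A'$ makes $\mathcal{V}'$ a vector space over $\Q(\zeta_r)[t]/(\mathcal{P}_\ell(t))\cong\Q(\zeta_{\ell r})$.

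Finally, $\mathcal{V}'$ being a nonzero $\Q(\zeta_{\ell r})$-vector space gives $\dim_\Q(\mathcal{V})\ge\dim_\Q(\mathcal{V}')\ge[\Q(\zeta_{\ell r}):\Q]=(\ell-1)(r-1)$, as desired. The only step requiring more than routine bookkeeping is the irreducibility of $\mathcal{P}_\ell$ over $\Q(\zeta_r)$ — equivalently the linear disjointness of the two cyclotomic fields — which is classical and uses precisely the assumption that $\ell$ and $r$ are distinct primes (the oddness is not needed for this lemma). One could equally well run the argument with the roles of $A$ and $B$ (and of $\ell$ and $r$) interchanged, decomposing first with respect to $A$.
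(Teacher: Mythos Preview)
Your proof is correct and follows essentially the same route as the paper: both pass to the nonzero $A,B$-invariant subspace $\mathcal{V}'=\ker\mathcal{P}_r(B)=(B-\mathrm{1}_{\mathcal{V}})\mathcal{V}$, show that the restrictions of $A$ and $B$ have minimal polynomials $\Phi_\ell$ and $\Phi_r$, and use the linear disjointness of $\Q(\zeta_\ell)$ and $\Q(\zeta_r)$ to endow $\mathcal{V}'$ with a $\Q(\zeta_{\ell r})$-vector space structure. The only cosmetic difference is that the paper phrases the last step via the tensor product $\Q[A_0]\otimes_{\Q}\Q[B_0]\cong\Q(\zeta_{\ell r})$, whereas you build up the field in two stages, first $\Q(\zeta_r)$ via $B$ and then adjoining $\zeta_\ell$ via the $\Q(\zeta_r)$-linear operator $A'$; your observation that oddness is not actually used is also correct.
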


 \begin{proof}[Proof of Theorem \ref{isogEll} (modulo  Lemmas \ref{isogenyNotInv}
 and \ref{Qln})]
 Recall that $X=J^{(f,\ell)})$.
 
 It follows from Lemma \ref{remainIrr} that $f(x)$ remains irreducible over $K(Y[\ell])$.
 So, replacing $K$ by $K(Y[\ell])$, we may and will assume that $K(Y[\ell])=K$.
 Now it follows  from  Lemma \ref{isogenyNotInv}) that there is a prime $r\ne \ell$ that divides  $n$ and enjoys the following property.
 
 Both endomorphism algebras $\End^0(X)=\End^0(J^{(f,\ell)})$ and  $\End^0(Y)$
contain an invertible element of multiplicative order $r$.
In order to finish the proof of our Theorem, we need to prove the inequality
$$\dim_{\Q}(\End^0(X)) \ge (\ell-1)(r-1).$$
Let us use Lemma \ref{Qln} applied to $\mathcal{V}=\End^0(X)$. We define the automorphisms $A, \ B: \mathcal{V} \to \mathcal{V}$ of the $\Q$-vector space $\mathcal{V}=\End^0(X)$ as 
$$v \mapsto \delta_{\ell} v \ \text{ and } v \mapsto v u \quad \forall v \in \mathcal{V}=\End^0(X)$$
respectively. Clearly, $A$ and $B$ are commuting automorphisms of  $\mathcal{V}$ such that 
both $A^{\ell}$ and $B^r$ coincide with the {\sl identity automorphism} $\mathrm{1}_{\mathcal{V}}$ of 
$\mathcal{V}$. Since $u$ has multiplicative order $r>1$, $B \ne \mathrm{1}_{\mathcal{V}}$. On the
other hand, we know that the $\Q$-subalgebra 
$\Q[\delta_{\ell}]$ of  $\End^0(J^{(f,\ell)})=\End^0(X)$
is a subfield with the same identity element as $\End^0(X)$.
This implies that the nonzero $\delta_{\ell}-\mathrm{1}_X$ is an {\sl invertible} element of the $\Q$-algebra $\End^0(X)$. It follows that $A-\mathrm{1}_{\mathcal{V}}$ is an invertible automorphism of the $\Q$-vector space $\mathcal{V}$. So, $A$ and $B$
satisfy all the conditions of  Lemma \ref{Qln}. Applying Lemma \ref{Qln}, we conclude that
$\dim_{\Q}(\End^0(X)) \ge (\ell-1)(r-1)$, which ends the proof
of  Theorem \ref{isogEll}.


 \end{proof}

\begin{proof}[Proof of Lemma \ref{isogenyNotInv}]
In light of the theorem of Silverberg (Remark \ref{silver}(ii)), all endomorphisms of $Y$ are defined over $K$.
Applying this theorem  (see Remark \ref{silver}(ii) above) to $X\times Y$, we conclude that all the homomorphisms from $X$ to $Y$ are defined over $K(X[\ell])$.

Let $\mu: X \to Y$ be an isogeny. Dividing, if necessary, $\mu$ by a suitable power of $\ell$, we may and will assume that
\begin{equation}
\label{notZero}
\mu(X[\ell]) \ne \{0\}.
\end{equation}
Let us put
$$G_{\ell}:=\tilde{G}_{\ell,X,K}=\Gal(K(X[\ell])/K), \ G=\Gal(K(X[1-\delta_{\ell}])/K)=\Gal(f/K).$$
We know that $\mu$ is defined over $K(X[\ell])$. This allows us to define for each $\sigma \in G_{\ell}$ the isogeny
$\sigma(\mu):X \to Y$, which is the Galois-conjugate of $\mu$ (recall that both $X$ and $Y$ are defined over $K$). 
Then the same construction as in \cite[Sect. 4, proof of Prop. 2.4]{ZarhinMRL22} allows us to define a map
$$c: G_{\ell} \to \End^0(Y)^{*}, \ \sigma \mapsto c(\sigma)$$
where $c(\sigma)$ is determined by
$$\sigma(\mu)=c(\sigma)\mu \ \forall \sigma \in G_{\ell}=\Gal(K(X[\ell])/K).$$
We have for each $\sigma,\tau \in G_{\ell}$
$$c(\sigma\tau)\mu=\sigma\tau(\mu)=\sigma(\tau(\mu))= \sigma(c(\tau)\mu)=c(\tau) \sigma(\mu)=c(\tau)c(\sigma)\mu$$
(here we use that all elements of $\End(Y)$ are defined over $K$, i.e., are $G_{\ell}$-invariant).
Therefore
$$c(\sigma\tau)=c(\tau)c(\sigma) \ \forall \sigma, \tau \in G_{\ell}=\Gal(K(X[\ell])/K).$$
This means that the map
$$\chi: G_{\ell}=\Gal(K(X[\ell])/K) \to \End^0(Y)^{*}, \ \sigma \mapsto \chi(\sigma)=c(\sigma)^{-1}$$
is a {\sl group homomorphism}. Let $\Gamma \subset \End^0(Y)^{*}$ be the image of $\chi$, which is a finite
subgroup of $\End^0(Y)^{*}$.  We need to check that there is a prime divisor $r$ of $n$ that divides $\#(\Gamma)$.

Let $H_{\ell}$ be the kernel of $\chi$, i.e.,
\begin{equation}
\label{H4}
H_{\ell}=\{\sigma \in G_{\ell}\mid \sigma(\mu)=\mu\}.
\end{equation}
By definition, $H_{\ell}$ is a normal subgroup of $G_{\ell}$. Let $H$ be the image of $H_{\ell}$ in $G$ under the  natural {\sl surjective} group homomorphism
$$G_{\ell}=\Gal(K(X[\ell])/K)\twoheadrightarrow \Gal(K(X[1-\delta_{\ell}])/K)=$$
$$\Gal(K(\RR_f)/K)=\Gal(f/K)=G$$
induced by the  inclusion 
$$K(\RR_f)=K(X[1-\delta_{\ell}])\subset K(X[\ell])$$
of Galois extensions of $K$.
The surjectiveness implies that  $H$ is a normal subgroup of $G$ and the index $(G:H)$ divides 
$$(G_{\ell}:H_{\ell})=\#(\Gamma).$$

In order to finish the proof, we  need the following assertion that will be proven at the end of this section.

\begin{prop}
\label{GneH}
The subgroup $H$ of $G$ is not transitive on $\RR_f$. 
\end{prop}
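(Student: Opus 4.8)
The plan is to argue by contradiction: assume $H$ acts transitively on $\RR_f$ and derive that the isogeny-conjugation cocycle $c$ forces $\mu(X[\ell])$ to be a nonzero Galois module that cannot exist. First I would use the hypothesis to upgrade transitivity: since $H \triangleleft G$ is transitive on the $n$-element set $\RR_f$, and since every element of $H$ fixes $\mu$ (by \eqref{H4}), the isogeny $\mu$ is defined over the fixed field $L$ of $H_\ell$, i.e. $\mu \in \Hom_L(X,Y)$. In particular the subgroup $H_\ell = \Gal(K(X[\ell])/L)$ acts trivially on $\mathrm{Im}(\mu|_{X[\ell]}) \subseteq Y[\ell]$, so that nonzero subspace of $Y[\ell]$ is an $H$-submodule on which $H$ — viewed through $G_\ell \twoheadrightarrow G$ — acts trivially; more precisely, since $K(Y[\ell]) = K$, the whole group $G_\ell$ acts trivially on $Y[\ell]$, and the point is that $\mu(X[\ell])$ is a nonzero image of a $G_\ell$-equivariant map $X[\ell] \to Y[\ell]$ with trivial target action.

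The key step is then to apply Corollary \ref{ellGroup}(ii): the Galois module $X[\ell]$ carries a filtration whose successive quotients are all isomorphic to $Q_{\RR_f}$. Because $f$ is irreducible over $K$ (hence $G = \Gal(f/K)$ is transitive on $\RR_f$), Lemma \ref{invTran}(v) applies with $M = X[\ell]$ and $V = Y[\ell]$ equipped with the trivial $\Gal(K)$-action (each simple subquotient of the trivial module is trivial): every homomorphism of Galois modules $X[\ell] \to Y[\ell]$ is zero. But $\mu$ restricted to $X[\ell]$ is precisely such a homomorphism, so $\mu(X[\ell]) = \{0\}$, contradicting \eqref{notZero}. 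Hence $H$ cannot be transitive on $\RR_f$.

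The one point requiring care — and the main obstacle — is making rigorous the passage from "$H$ transitive on $\RR_f$" to "$\mu|_{X[\ell]}$ is $\Gal(K)$-equivariant with the correct source and target". We do not literally need $H_\ell$ to be the full stabilizer; rather, we observe that for every $\sigma \in G_\ell$ we have $\sigma(\mu) = c(\sigma)\mu$ with $c(\sigma) \in \End(Y)$, and that $\End(Y)$ acts on $Y[\ell]$; transitivity of $H$ on $\RR_f$ combined with Lemma \ref{orbits} (applied to $G$ and its normal subgroup $H$) shows $(G:H)$ is prime to $n$, and then the real leverage comes not from $H$ but from the observation that $X[\ell]$ has a $Q_{\RR_f}$-filtration while every candidate target is Galois-trivial. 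In fact the cleanest route avoids $H$ almost entirely: since $\End(Y)$ is $G_\ell$-invariant, $\mu \mapsto \sigma(\mu)$ shows the $\F_\ell[G_\ell]$-span of $\mu|_{X[\ell]}$ inside $\Hom_{\F_\ell}(X[\ell], Y[\ell])$ consists of genuine Galois homomorphisms into the trivial module $Y[\ell]$, so Lemma \ref{invTran}(v) kills all of them. Thus the assumption that $H$ is transitive — equivalently, by the structure of the argument, that $\mu|_{X[\ell]} \ne 0$ is compatible with the filtration — is untenable, and $H$ fails to be transitive on $\RR_f$, as claimed. Once this is established, combining it with Lemma \ref{orbits} will give (in the surrounding proof) that $(G:H)$ is divisible by some prime $r \mid n$, and since $(G:H) \mid \#(\Gamma)$, this $r$ divides $\#(\Gamma)$, yielding an element of order $r$ in $\Gamma \subset \End^0(Y)^*$, and via $\delta_\ell$-conjugation also in $\End^0(X)^*$.
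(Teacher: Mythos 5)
Your first two paragraphs identify the right ingredients — Silverberg's theorem, the fact that $\mu$ is fixed by $H_\ell$ hence defined over $L:=K(X[\ell])^{H_\ell}$, Corollary~\ref{ellGroup}(ii), and Lemma~\ref{invTran}(v) — and the paper's proof indeed turns exactly on these. However, there is a genuine gap in how you connect them, and the third paragraph's ``cleanest route'' does not close it.

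The gap is this: you assert that $\mu|_{X[\ell]}:X[\ell]\to Y[\ell]$ is a $G_\ell$-equivariant map. That would require $\mu$ to be defined over $K$, but you have only established that $\mu$ is defined over $L$; hence $\mu|_{X[\ell]}$ is merely $\Gal(\bar K/L)$-equivariant (equivalently $H_\ell$-equivariant). Consequently you may not quote Lemma~\ref{invTran}(v) ``because $f$ is irreducible over $K$'': the Galois group that actually acts equivariantly is $\Gal(\bar K/L)$, and the lemma then needs $f$ to be irreducible over $L$, not $K$. This is precisely what the assumed transitivity of $H$ must be used to establish, and it is the step you neither state nor prove. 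The paper's proof does it cleanly: it first observes that $H$ transitive forces $H_\ell$ to act transitively on $\RR_f$ (by lifting each $\sigma\in H\subset\Gal(K(\RR_f)/K)$ to some $\sigma_\ell\in H_\ell$), hence $f$ remains irreducible over $L$; only then does it replace $K$ by $L$ so that the full Galois group becomes $H_\ell$, $\mu|_{X[\ell]}$ becomes honestly Galois-equivariant, and Lemma~\ref{invTran}(v) applies relative to $L$. Without that base-change-plus-irreducibility step, the filtration argument does not have the hypotheses it needs.

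Your ``cleanest route'' does not repair this. First, the logic with Lemma~\ref{orbits} is reversed: that lemma shows that if $(G:H)$ is prime to $n$ then $H$ is transitive, not the other way around — transitivity of $H$ places no constraint on $(G:H)$. Second, the claim that the $\F_\ell[G_\ell]$-span of $\mu|_{X[\ell]}$ in $\Hom_{\F_\ell}(X[\ell],Y[\ell])$ ``consists of genuine Galois homomorphisms'' is false: using $\sigma(\mu)=c(\sigma)\mu$ one sees that span is a Galois-\emph{stable} subspace, but its individual elements need not be Galois-\emph{equivariant}, so Lemma~\ref{invTran}(v) cannot be applied to them as stated. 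The correct and only place transitivity of $H$ enters is in guaranteeing $f$ stays irreducible over $L$, and that step should be made explicit.
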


{\bf End of Proof of Lemma \ref{isogenyNotInv} (modulo Proposition \ref{GneH})}
Combining Proposition \ref{GneH} with Lemma \ref{orbits}, we conclude that $(G:H)$ is {\sl not} prime to $n$.
Hence, there is a prime $r$ that divides both $(G:H)$ and $n$. Since   $n$ is prime to $\ell$ and $(G:H)$ divides 
$(G_{\ell}:H_{\ell})$, we conclude that  $r \ne \ell$ and $r$ divides $(G_{\ell}:H_{\ell})=\#(\Gamma)$. This ends the proof.

\end{proof}

 \begin{proof}[Proof of  Proposition \ref{GneH}]
 Suppose that $H$ is   transitive. Then $f(x)$ remains {\sl irreducible} over the subfield $E:=K(\RR_f)^{H}$ of all $H$-invariants in $K(\RR_f):=F$.  Clearly, $E/K$ is a  Galois extension and
 $$K\subset E \subset F=K(\RR_f)=E(\RR_f).$$
 Let us consider the subfield $L:=K(X[\ell])^{H_{\ell}}$ of all $H_{\ell}$-invariants in  $K(X[\ell])$, which is also a Galois extension of $K$.
 We have
 $$K(X[\ell])=L(X[\ell]), \quad H_{\ell}=\Gal(K(X[\ell])/L)=\Gal(L(X[\ell])/L).$$ 
 In addition,
 $$K\subset L=K(X[\ell])^{H_{\ell}}\supset K(\RR_f)^H=F^{H}=E\supset K$$
 and
 $$E=F^{H} =F\cap K(X[\ell])^{H_{\ell}}=F\cap L,$$
 becase $H$ is the image of $H_{\ell}\subset \Gal(K(X[\ell])/K)$
 in $\Gal(K(\RR_f)/K)=\Gal(F/K)$.
 
 We want to prove that $f(x)$ remains {\sl irreducible} over $L$. In order to do it,
 notice that $H_{\ell}$ acts transitively on $\RR_f$. Indeed, let $\alpha,\beta$ be  elements of $\RR_f$.
 By our assumption, there is 
 $\sigma \in H\in \Gal(K(\RR_f)/K)$ such that $\sigma(\alpha)=\beta$. Pick a field automorphism
 $$\sigma_{\ell} \in H_{\ell} \subset \Gal(K(X[\ell])/L)\subset \Gal(K(X[\ell])/K)$$
 such that the restriction of $\sigma_{\ell}$ to $K(\RR_f)$ coincides with $\sigma$. Since
 $$\alpha \in \RR_f \subset K(\RR_f),$$
 we get
 $$\sigma_{\ell}(\alpha)=\sigma(\alpha)=\beta.$$
 This proves the transitivity of of the action of $H_{\ell}$ on the set $\RR_f$ of roots of $f(x)$.
  It follows that $f(x)$ is {\sl irreducible} over
 the field $K(X[\ell])^{H_{\ell}}=L$.
 
 
 
  Replacing $K$ by its overfield $L=K(X[\ell])^{H_{\ell}}$, we may and will assume that 
  $$H_{\ell}=\Gal(K(X[\ell])/K).$$
  In particular,  
  \begin{equation}
  \label{muGalois}
  \sigma(\mu)=\mu\  \forall \sigma \in H_{\ell}=\Gal(K(X[\ell])/K).
  \end{equation}
  Recall that
  \begin{equation}
  \label{sigmaX4}
  \sigma(\mu)(\sigma(x))=\sigma(\mu(x))\  \forall \sigma \in \Gal(K(X[\ell])/K), \
  x \in X(K[\ell]).
  \end{equation}
  Since $X[\ell] \subset  X(K[\ell])$ and {\sl nonzero} $\mu(X[\ell])$ obviously lies in $Y[\ell]$, we conclude that the map
  \begin{equation}
  \label{XellYell}
   X[\ell] \to Y[\ell] , \ x \mapsto \mu(x)
   \end{equation}
  is a {\sl nonzero} homomorphism of $\Gal(K)$-modules. Recall that we assume  that  the Galois action on $Y[\ell]$ is {\sl trivial}.
  On the other hand, in light of Corollary \ref{ellGroup}, the Galois module $X[\ell]$ admits a filtration, all whose consecutuve quotients
  are isomorphic to $Q_{\RR_f}$. Since $f(x)$ is irreducible over $K$, it follows from  Lemma \ref{invTran} that the homomorphism \eqref{XellYell} is {\sl zero},
  which is not the case. The obtained contradiction proves that $H$ is not transitive. 
\end{proof}

\begin{proof}[Proof of Lemma \ref{Qln}]
Clearly, $B$ is a semisimple (i.e., diagonalizable over $\bar{\Q}$) linear operator in $\mathcal{V}$. The same is obviously true for the linear operator $B-\mathrm{1}_{\mathcal{V}}:\mathcal{V} \to \mathcal{V}$. The semisimplicity of $B-\mathrm{1}_{\mathcal{V}}$ implies that $\mathcal{V}$ splits into a direct sum
$$\mathcal{V}=\left(B-\mathrm{1}_{\mathcal{V}}\right)(\mathcal{V})\oplus \ker\left(B-\mathrm{1}_{\mathcal{V}}\right)$$
of the image $\left(B-\mathrm{1}_{\mathcal{V}}\right)(\mathcal{V})$ and the kernel $\ker\left(B-\mathrm{1}_{\mathcal{V}}\right)$
of $B-\mathrm{1}_{\mathcal{V}}$;
clearly, these two subspaces are $B$-invariant.
 In addition, the restriction of $B-\mathrm{1}_{\mathcal{V}}$ to the subspace
$$\mathcal{V}_0:=\left(B-\mathrm{1}_{\mathcal{V}}\right)(\mathcal{V})$$
is an automorphism of $\mathcal{V}_0$. Recall that our conditions on $B$ imply that
$$\mathcal{V}_0 \ne \{0\}.$$

Since $A$ and $B$ commute, both subspaces $\left(B-\mathrm{1}_{\mathcal{V}}\right)(\mathcal{V})=\mathcal{V}_0$
and  $\ker\left(B-\mathrm{1}_{\mathcal{V}}\right)$.
are also $A$-invariant. Let
$$A_0, \ B_0: \mathcal{V}_0 \to \mathcal{V}_0$$
be the restrictions to $\mathcal{V}_0$ of $A$ and $B$ respectively. Clearly, $A_0$ and  
$B_0$ commute, and both $A_0^{\ell}$ and $B_0^r$ coincide with the {\sl identity automorphism} $\mathrm{1}_{\mathcal{V}_0}$ of $\mathcal{V}_0$. In addition, both
$A_0-\mathrm{1}_{\mathcal{V}_0}$ and $B-\mathrm{1}_{\mathcal{V}_0}$ are automorphisms of $\mathcal{V}_0$.

Let 
$$\mathcal{P}_{A_0}(t),  \ \mathcal{P}_{B_0}(t) \in \Q[t]$$
be the {\sl minimal polynomials} of $A_0:\mathcal{V}_0 \to \mathcal{V}_0$ and $B_0: \mathcal{V}_0 \to \mathcal{V}_0$ respectively. Both minimal polynomials are monic of positive degree, and all their coefficients are rational numbers. Clearly,
$\mathcal{P}_{A_0}(t)$ divides $t^{\ell}-1$ and  $\mathcal{P}_{B_0}(t)$ divides $t^r-1$; in addition, $t-1$ divides neither $\mathcal{P}_{A_0}(t)$ nor $\mathcal{P}_{B_0}(t)$. 
Recall that
$$t^{\ell}-1 =(t-1)\Phi_{\ell}(t), \quad t^r-1 =(t-1)\Phi_{\ell}(t)$$
where $\Phi_{\ell}(t)$ and $\Phi_r(t)$ are $\ell$th and $r$th cyclotomic polynomials respectively; they both are irreducible over $\Q$. It follows that
$$\mathcal{P}_{A_0}(t)= \Phi_{\ell}(t),  \quad \mathcal{P}_{B_0}(t) =\Phi_r(t).$$
This implies that the $\Q$-subalgebra $\Q[A_0]$ of $\End_{\Q}(\mathcal{V}_0)$ generated by $A_0$ is isomorphic to the $\ell$th cyclotomic field
$$\Q[t]/\Phi_{\ell}(t)\Q[t] \cong \Q(\zeta_{\ell}).$$
Similarly, the $\Q$-subalgebra $\Q[B_0]$ of $\End_{\Q}(\mathcal{V}_0)$ generated by $B_0$ is isomorphic to the $r$th cyclotomic field
$$\Q[t]/\Phi_r(t)\Q[t] \cong \Q(\zeta_r).$$
Since $A_0$ and $B_0$ commute, the (commutative) $\Q$-subalgebras  $\Q[A_0]$ and $\Q[B_0]$ also commute. This implies that the {\sl nonzero} $\Q$-vector space $\mathcal{V}_0$ carries the natural structure of a module over the $\Q$-algebra
$$\Q[A_0]\otimes_{\Q}\Q[B_0] \cong \Q(\zeta_{\ell})\otimes_{\Q}\Q(\zeta_r).$$
Since $r$ and $\ell$ are distinct odd primes, the cyclotomic fields
$\Q(\zeta_{\ell})$ and ${\Q}\Q(\zeta_r)$ are linearly disjoint over $\Q$; actually, this tensor product is canonically isomorphic to $\ell r$th cyclotomic field $\Q(\zeta_{\ell r})$ of degree
$(\ell-1)(r-1)$. It follows that $\mathcal{V}_0$ carries the natural structure of 
a  $\Q(\zeta_{\ell r})$-vector space. Hence, 
$\dim_{\Q}(\mathcal{V}_0)$ is divisible by the degree
$$[\Q(\zeta_{\ell r}):\Q]=(\ell-1)(r-1).$$
Since $\mathcal{V}_0 \ne \{0\}$, we conclude that
$\dim_{\Q}(\mathcal{V}_0)\ge (\ell-1)(r-1).$ Taking into account that $\mathcal{V}_0$ is a subspace of $\mathcal{V}$, we conclude that
$$\dim_{\Q}(\mathcal{V})\ge \dim_{\Q}(\mathcal{V}_0)\ge (\ell-1)(r-1).$$
This ends the proof of our Lemma.
\end{proof}

\section{Proof of  Theorem \ref{endoH}}
\label{PendoH}
So, $n$ is an odd {\sl prime}, both $f(x)$ and $h(x)\in K[x]$ are degree $n$ polynomials without repeated roots,
$f(x)$ is irreducible and $h(x)$ is reducible. Since $n$ is a prime,  the reducibility of $h(x)$ implies that 
the order of $\Gal(h/K)$ is prime to $n$ (see \cite[Lemma 2.6]{ZarhinMRL22}).
Let us put $Y=J^{(h,\ell)}$. We are given that the degree $[K(\RR_h):K]$ of the field extension $K(\RR_h)/K$ is {\sl not} divisible by $n$. Applying Corollary \ref{ellGroup} to  $Y$ and $h(x)$ (instead of $X=J^{(f,\ell)}$ and $f(x)$), we conclude that 
the order of the group  $\tilde{G}_{\ell,Y,K}$ 
is prime to $n$.
Now the desired result follows readily from Theorem \ref{isogEll}, because if $r$ is a prime divisor of $n$ then $r=n$.

  \section{Doubly transitive and cyclic Galois groups}
  \label{news}

  \begin{proof}[Proof of Corollary \ref{doublePrime}]
   By definition of the field 
   $$K_h:=K(\RR_h),$$
   the polynomial $h(x)$ splits into a product of linear factors over $K_h$. Recall that $\Gal(K_h/K)=\Gal(h/K)$ is a {\sl cyclic} group of prime order $n$ and $\Gal(f/K)=\Gal(K(\RR_f)/K)$ is {\sl doubly transitive}.
   In light of Proposition 1.8 of \cite{ZarhinMRL22}, the field extensions $K(\RR_f)/K$ and $K_h/K$ are {\sl linearly disjoint}.  This implies that $f(x)$ remains irreducible over $K_h$. Now the desired result follows readily from Theorem \ref{endoH}
   applied to $f(x)$ and $h(x)$ over $K_h$ (instead of $K$).
  \end{proof}

\end{document}